\documentclass[12pt]{amsart}
\usepackage{graphicx}
\usepackage[headings]{fullpage}
\usepackage{bm,amssymb,epic,eepic,epsfig,amsbsy,amsmath,amscd}
\numberwithin{equation}{section}
                        \textwidth16cm
                        \textheight23cm
                        \topmargin-1cm
                        \oddsidemargin 0.2cm
                        \evensidemargin 0.2cm
                        \theoremstyle{plain}
\usepackage{mathrsfs}

\newcommand\no[1]{}

\newtheorem{theorem}{Theorem}[section]
\newtheorem{thm}{Theorem}
\newtheorem{lemma}[theorem]{Lemma}

\newtheorem{proposition}[theorem]{Proposition}

\theoremstyle{definition}
\newtheorem{remark}[theorem]{Remark}

\newcommand{\lcr}{\raisebox{-5pt}{\mbox{}\hspace{1pt}
                  \epsfig{file=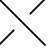}\hspace{1pt}\mbox{}}}

\def\BC{\mathbb C}
\def\BN{\mathbb N}
\def\BZ{\mathbb Z}
\def\BR{\mathbb R}

\def\fb{\mathfrak b}

\def\la{\langle}
\def\ra{\rangle}

\DeclareMathOperator{\tr}{\mathrm tr}

\def\be { \begin{equation} }
\def\ee { \end{equation} }

\begin{document}

\title[LO of cyclic branched covers]
{Left orderability of cyclic branched covers of rational knots 
$\bm{C(2n+1,2m,2)}$}

\author{Bradley Meyer}
\author{Anh T. Tran}

\begin{abstract}
We compute the nonabelian $\mathrm{SL_2}(\BC)$-character varieties of the rational knots $C(2n+1,2m,2)$ in the Conway notation, where $m$ and $n$ are non-zero integers. By studying real points on these varieties, we determine the left orderability of the fundamental groups of the cyclic branched covers of $C(2n+1,2m,2)$.
\end{abstract}

\thanks{2000 {\it Mathematics Subject Classification}.
Primary 57M27, 57M25.}

\thanks{{\it Key words and phrases.\/}
character variety, cyclic branched cover, left orderability, rational knot, representation, Riley polynomial.}

\address{Department of Mathematical Sciences, The University of Texas at Dallas, 
Richardson, TX 75080, USA}
\email{Bradley.Meyer@utdallas.edu}

\address{Department of Mathematical Sciences, The University of Texas at Dallas, 
Richardson, TX 75080, USA}
\email{att140830@utdallas.edu}

\maketitle

\section{Introduction}

We consider an important class of knots/links called rational knots/links. These are also known as two-bridge knots/links. In the Conway notation, a rational knot/link corresponds to a continued fraction 
\[
[a_1, a_2, \dots, a_k] =
a_1 + \dfrac{1}{a_2 + \dfrac{1}{\ddots \dfrac{1}{a_k}}}
\]
and it is denoted by $C(a_1,a_2, \ldots, a_k)$. This knot/link is the two-bridge knot/link $\fb(p,q)$ in the Schubert notation, where $\frac{p}{q} = [a_1, a_2, \dots, a_k]$, and so it is a knot if $p$ is odd and is a link if $p$ is even. Its knot/link diagram 
 is depicted as in Figure \ref{fig:rational}, where $a_i$ denotes the number of twists with sign in the $a_i$ box. Here the sign of the twist $\lcr$ in the $a_i$ box  is positive/negative for odd/even $i$.

\begin{figure}[h]
	\centering
	\includegraphics[scale=1]{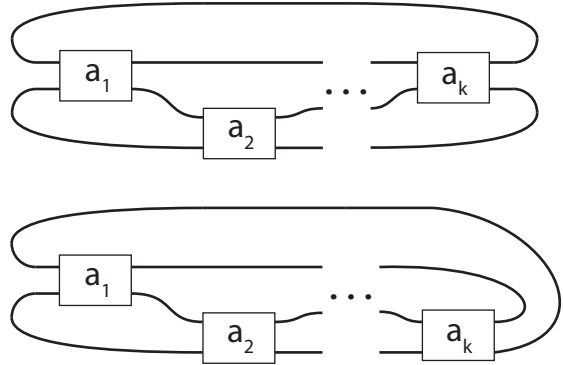}
	\caption{Rational knot/link $C(a_1,a_2, \ldots,a_k)$. The upper/lower one corresponds to odd/even $k$.}
	\label{fig:rational}
\end{figure}

\no{We will compute the nonabelian $\mathrm{SL_2}(\BC)$-character varieties of the rational knots $C(2n+1,2m,2)$ in the Conway notation. By studying real points on these varieties, we wil determine the left orderability of the fundamental groups of the cyclic branched covers of $C(2n+1,2m,2)$.
}

A non-trivial group $G$ is left orderable if there is a total ordering $<$ on $G$ such that $g < h$ implies $fg < fh$ for every $f, g, h \in G$. A motivation for studying left orderable groups in topology is a conjectured connection with L-spaces. An L-space is a rational homology sphere $M$ with rank of $\mathrm{\widehat{HF}} (M)$ equal to $| H_1 (M, \BZ) |$ \cite{OS}. The L-space conjecture of Boyer-Gordon-Watson \cite{BGW}  states that an irreducible rational homology 3-sphere is an L-space if and only if its fundamental group is not left orderable. 

We consider cyclic branched covers of knots in $S^3$ and study the left orderability of their fundamental groups. A sufficient condition for the fundamental group of the $r$-th cyclic branched cover of a prime knot to be left orderable was given in \cite{BGW, Hu}. As an application, it was proved that for any rational knot $K$ with non-zero signature the fundamental group of the $r$-th cyclic branched cover of $K$ is left orderable for sufficiently large $r$, see \cite{Hu, Tr-sign, Go}. For rational knots $C(k,2l)$, the left orderability of the fundamental groups of their cyclic branched covers was determined in \cite{DPT, Tr-DTK, Tu}. Moreover, Turner \cite{Tu} also determined the left orderability of the fundamental groups of the cyclic branched covers of the rational knots $C(2n+1,2,2)$ for positive integers $n$. 

In this paper, we will generalize Turner's result to all rational knots $C(2n+1,2m,2)$ where $m$ and $n$ are non-zero integers.  By studying real points on the nonabelian $\mathrm{SL_2}(\BC)$-character varieties of knot groups, we will prove the following.

\begin{thm} \label{thm2}
	The fundamental group of the $r$-th cyclic branched cover of the rational knot $C(2n+1, 2m, 2)$ is left orderable if
	\begin{enumerate}
		\item $r \ge 3$ when $n \ge 3$ or $n \le -4$.
		\item $r \ge 4$ when $n =2$ or $n =-3$.
		\item $r \ge 5$ when $n=1$ and $m=1,2$, or $n=-2$ and $m=-1$.
		\item $r \ge 6$ when $n=1$ and $m \ge 3$, or $n=-2$ and $m \le -2$.
		\item $r \geq 7$ when $n=1$ and $m \leq -4$ or $n = -2$ and $m \geq 6$.
		\item $r \geq 8$ when $n=1$ and $m = -2,-3$.
		\item $r \geq 9$ when $n=1$ and $m = -1$.
	\end{enumerate}
\end{thm}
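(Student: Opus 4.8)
The plan is to combine the explicit description of the nonabelian $\mathrm{SL}_2(\BC)$-character variety of $K = C(2n+1,2m,2)$ with the now-standard left-orderability criterion of Boyer--Gordon--Watson and Hu \cite{BGW, Hu}, in the form exploited in \cite{Tr-DTK, Tu}. Write $\phi_K(y,u)$ for the Riley polynomial of $K$, with $y=\tr\rho(\mu)$ the meridian trace and $u$ the trace of the product of the two meridian generators, so that the nonabelian $\mathrm{SL}_2(\BC)$-character variety is its zero locus; this polynomial is obtained in the course of computing the character variety elsewhere in the paper. The criterion reduces left orderability of $\pi_1(\Sigma_r(K))$ to producing a continuous arc $\theta\mapsto(2\cos\theta,u(\theta))$, $\theta\in[0,\pi/r]$, of real points on the character variety (so $\phi_K(2\cos\theta,u(\theta))=0$) which starts at an irreducible parabolic $\mathrm{SL}_2(\BR)$-representation at $\theta=0$ and consists of irreducible $\mathrm{SL}_2(\BR)$-representations for $\theta\in(0,\pi/r]$; concretely, $u(\theta)$ must avoid the reducible locus and stay on the $\mathrm{SL}_2(\BR)$-side of the sign condition separating real representations into $\mathrm{SU}(2)$- and $\mathrm{SL}_2(\BR)$-types. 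The $\theta=0$ endpoint is not the difficulty: $C(2n+1,2m,2)$ has non-zero signature, which (as in \cite{Hu, Tr-sign, Go}) forces $\phi_K(2,u)$ to have a real root corresponding to an irreducible parabolic $\mathrm{SL}_2(\BR)$-representation. Note also that the values $r=1,2$ are necessarily excluded, since $\Sigma_1(K)=S^3$ and $\Sigma_2(K)$ is a lens space, so $r\ge 3$ in item (1) is best possible.

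The computational heart is then to substitute $y=2\cos\theta$ and track the real root $u(\theta)$ as $\theta$ increases from $0$. Because $C(2n+1,2m,2)$ has a two-level ``double twist'' structure, $\phi_K(2\cos\theta,u)$ can be written through Chebyshev polynomials $S_k(2\cos\theta)=\sin\big((k+1)\theta\big)/\sin\theta$, and the problem becomes an explicit real-analytic equation in $\theta$ and $u$ with parameters $n,m$. I would follow the branch $u(\theta)$ issuing from the parabolic root of $\phi_K(2,u)$ and determine its \emph{escape angle} $\theta_0(n,m)$ --- the smallest $\theta$ at which $u(\theta)$ meets the reducible locus, crosses the $\mathrm{SU}(2)/\mathrm{SL}_2(\BR)$ boundary, or leaves the reals --- by applying the intermediate value theorem to $\phi_K(2\cos\theta,\cdot)$ at a few well-chosen test values of $u$, together with monotonicity and sign estimates ruling out an earlier turning point of the branch. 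The asserted threshold is then $r\ge\lceil\pi/\theta_0(n,m)\rceil$; in practice, for a fixed target $r$ one substitutes $y=2\cos(\pi/r)$ directly and verifies that $\phi_K(2\cos(\pi/r),u)$ has the required real root in the $\mathrm{SL}_2(\BR)$-region and joined to the parabolic representation.

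For $|n|$ not too small one expects $\theta_0(n,m)>\pi/3$ uniformly in $m\ne 0$, yielding $r\ge 3$; as $|n|$ decreases the branch escapes sooner, the threshold rises, and the cases $n\in\{2,-3\}$ and then $n\in\{1,-2\}$ must be treated separately, the last splitting further by the sign and size of $m$ because the estimates become delicate. The main obstacle will be precisely these small-$|n|$ cases --- above all $n=1$ with $m$ negative, culminating in $r\ge 9$ for $n=1,\,m=-1$ --- where one must locate $\theta_0$ sharply enough to reach the stated $r$: this requires careful explicit sign computations of $\phi_K(2\cos(\pi/r),u)$ at the relevant $r$ and confirmation that the branch there is still irreducible, $\mathrm{SL}_2(\BR)$-valued, and connected to the parabolic endpoint. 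Since Theorem \ref{thm2} asserts only a sufficient condition, no sharpness (i.e.\ non-left-orderability below the threshold) need be proved; one needs, in each case, only a genuine representation of the required type. Finally, the negative-$n$ statements are obtained by an analogous analysis, and in part follow from the positive-$n$ ones via the mirror-image symmetry of $C(2n+1,2m,2)$, since mirroring a knot does not change the fundamental group of its $r$-fold cyclic branched cover.
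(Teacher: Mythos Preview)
Your proposal follows a genuinely different route from the paper, and in one place imports an assumption that the paper neither needs nor verifies.

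The paper does \emph{not} track an arc of representations from a parabolic endpoint at $\theta=0$ out to $\theta=\pi/r$; it invokes the single-representation form of the criterion (Theorem~\ref{huthm}): one nonabelian $\rho\colon G(K)\to\mathrm{SL}_2(\BR)$ with $\rho(\mu^r)=\pm I$ suffices. So there is no need to worry about the branch escaping, crossing the reducible locus, or remaining connected to a parabolic representation. Concretely, the paper sets $x=2\cos\frac{\pi}{r}$ (so $\rho(a)^r=-I$ automatically) and shows by explicit sign arguments that $R_{n,m}(x,y)=0$ has a real root $y>2$; the inequality $y>2$, i.e.\ $u>0$, is exactly the condition (via \cite{Kh}) that conjugates $\rho$ into $\mathrm{SL}_2(\BR)$. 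The sign arguments pair the asymptotic behavior of $R_{n,m}(x,y)$ as $y\to\infty$ (Lemma~\ref{limsign}) with a specific finite $y_0\ge 2$ where the sign is opposite: either $y_0=2$ (Lemma~\ref{y=2}), or a $y_0>2$ chosen so that $\beta$ hits a root of $S_n-S_{n-1}$ (Lemmas~\ref{root}, \ref{simplify}, \ref{asign1}), or --- in the delicate case $n=-2$, $m\ge 6$ --- a $y_0$ determined by $S_m(\alpha)+(1-y_0)S_{m-1}(\alpha)=1$ (Lemma~\ref{=1}). No Chebyshev-arc continuation from $\theta=0$ enters.

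Two specific points in your sketch are unjustified. First, you assert that $C(2n+1,2m,2)$ has non-zero signature in order to obtain the parabolic $\mathrm{SL}_2(\BR)$ representation at $\theta=0$; this is not proved and is unnecessary in the paper's argument. Second, the claim that the negative-$n$ cases follow from mirror symmetry is not quite right: the mirror of $C(2n+1,2m,2)$ is $C(-2n-1,-2m,-2)$, which is not of the form $C(2n'+1,2m',2)$, so one cannot read off the negative-$n$ statements directly; the paper treats the cases $n\le -2$ by arguments parallel to (not consequences of) those for $n\ge 1$. Your direct-substitution idea --- plug in $x=2\cos\frac{\pi}{r}$ and locate a suitable real root --- is exactly the paper's engine; the surrounding arc-tracking and signature framework can be dropped.
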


\begin{figure}[h]
	\centering
	\includegraphics[scale=1]{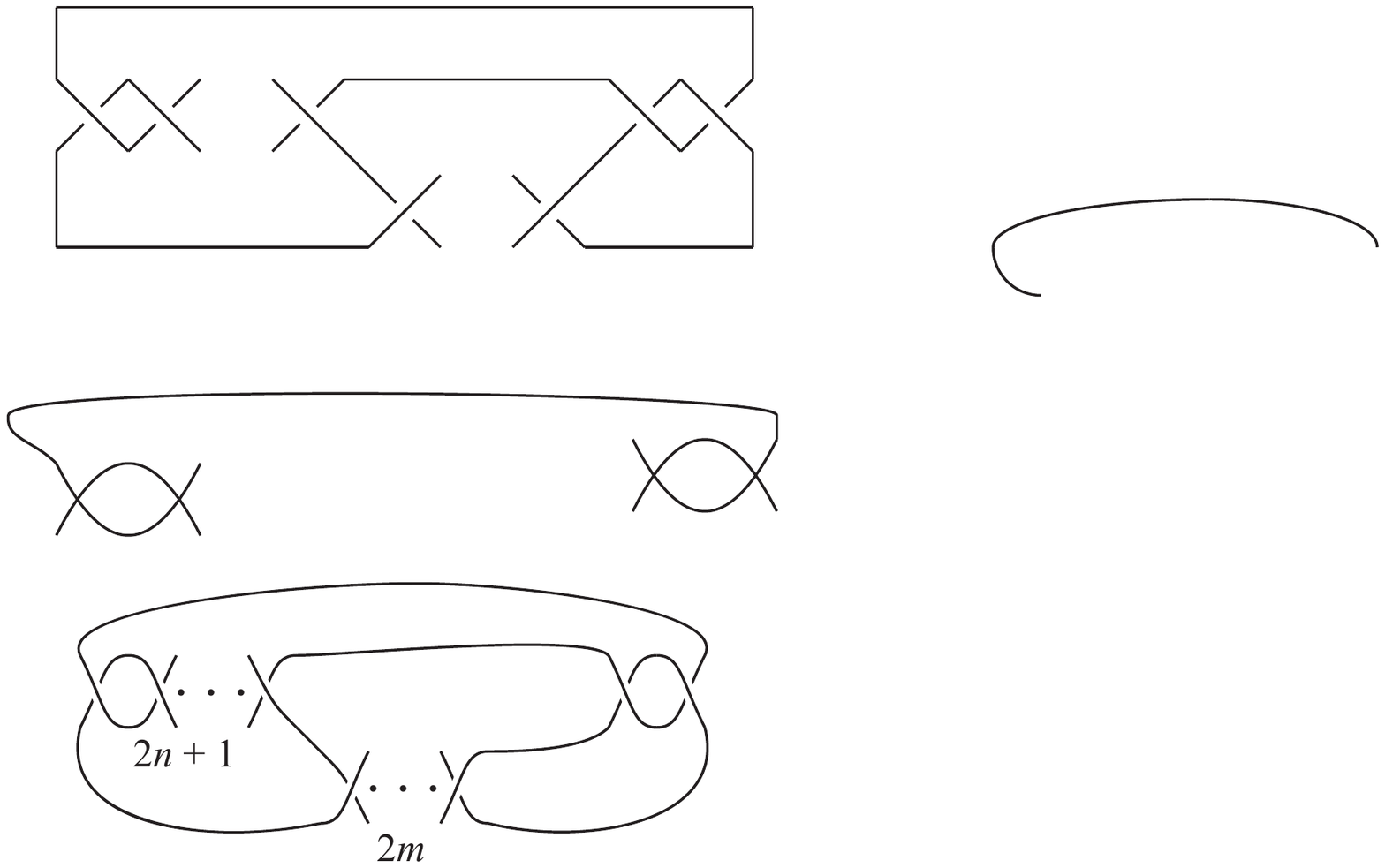}
	\caption{Rational knot $C(2n+1, 2m, 2)$.}
	\label{fig:Knot}
\end{figure}

This  paper is organized as follows. In Section \ref{Riley}, we compute the nonabelian $\mathrm{SL_2}(\BC)$-character varieties of the rational knots $C(2n+1, 2m, 2)$.  In Section \ref{lem}, we prove some properties of the Chebychev polynomials of the second kind and  character varieties of $C(2n+1, 2m,2)$. Finally, in Section \ref{LO} we study real points on these character varieties and give a proof of Theorem \ref{thm2}.

\section{Character varieties} \label{Riley}

In this section we will compute the nonabelian $\mathrm{SL_2}(\BC)$-character variety, i.e. the Riley polynomial, of $C(2n+1, 2m, 2)$ for non-zero integers $m$ and $n$. 

For a knot $K$ in $S^3$ we denote by $G(K)$ the knot group of $K$, which is the fundamental group of the knot complement $X_K = S^3 \setminus K$. 

\subsection{Knot group}  Let $K_{n,m}$ denote the rational knot $C(2n+1,2m,2)$. 

\begin{proposition} \label{prop10}
We have
$$G(K_{n,m})= \la a, b \mid wa =  b w \ra$$
where $w= v^n (aba^{-1}b^{-1})^m ab$ and $v = (aba^{-1}b^{-1})^m a (aba^{-1}b^{-1})^{-m} b$.
\end{proposition}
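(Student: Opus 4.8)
The plan is to read the presentation directly off the standard diagram of $K_{n,m}=C(2n+1,2m,2)$ shown in Figure \ref{fig:Knot}, running the Wirtinger algorithm and organizing the bookkeeping one twist region at a time. First I would orient $K_{n,m}$ (it is a knot, since $p=(2n+1)(4m+1)+2$ is odd) and choose the two over-passing ``bridge'' arcs of the diagram, labelling their meridians $a$ and $b$. Because every other arc meridian is then forced by the crossings, the group is generated by $a$ and $b$, and it remains only to follow these two labels through the three twist boxes containing $2n+1$, $2m$, and $2$ crossings and to see which relation survives when the diagram closes up.

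The basic input is the effect of a single twist region. At one crossing with over-arc $y$ and under-arc $x$ the outgoing under-arc is $y^{\pm1}xy^{\mp1}$, the sign being that of the crossing; iterating this through a box of $2\ell$ crossings carrying meridians $x$ and $y$ sends the two strands to $sxs^{-1}$ and $sys^{-1}$ for a word $s=s(x,y)$ which is a power $(\,\cdot\,)^{\pm\ell}$ of one of $xy^{-1}$ or $x^{-1}y$ (the precise factor and exponent sign dictated by the strand orientations and by the alternating sign convention of Figure \ref{fig:rational}), while a box with an \emph{odd} number of crossings does the same thing through the even part and in addition swaps the two strands. In particular a full twist of a pair of strands conjugates their labels by the commutator of those labels; this is the origin of every factor $(aba^{-1}b^{-1})^{\pm m}$ in the statement.

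Applying this twice in succession: pushing $a,b$ through the box of $2m$ crossings introduces $c=aba^{-1}b^{-1}$ and the powers $c^{\pm m}$, and continuing through the box of $2$ crossings contributes the residual meridian factors that assemble the element $v=c^{m}ac^{-m}b$. Finally the box of $2n+1$ crossings, being odd, contributes the $n$-th power $v^{n}$ together with the clasp that closes the two bridges. Tracking the two Wirtinger relations produced at this closure and eliminating all intermediate generators, exactly one relation remains — the other is a formal consequence of it, as always for a two-bridge knot, so the presentation has deficiency one as a knot-group presentation must — and after reduction it reads $wa=bw$ with $w=v^{n}(aba^{-1}b^{-1})^{m}ab$, as claimed.

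The one place anything can go wrong is the sign-and-orientation bookkeeping: inside a box the two strands need not be coherently oriented, and the twist sign in Figure \ref{fig:rational} alternates with the index of the box, so one must keep careful track of which of $xy^{-1},\,yx^{-1},\,x^{-1}y,\,y^{-1}x$ occurs and with which exponent in each $s(x,y)$, in order to land on $c^{m}$ and $c^{-m}$ in the correct positions inside $v$, on $ab$ rather than $ba$ or $a^{-1}b^{-1}$, and on the relation $wa=bw$ rather than $aw=wb$. I would pin this down by doing the full arc-labelling on an enlarged copy of Figure \ref{fig:Knot}, and I would sanity-check the outcome by specializing to $m=1$ and small $n$ to recover the known presentations of $C(2n+1,2,2)$, and by verifying that the abelianization of $\langle a,b\mid wa=bw\rangle$ is $\BZ$ and that the resulting Alexander polynomial matches that of $\fb(p,q)$ with $p/q=[2n+1,2m,2]$.
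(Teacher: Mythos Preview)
Your plan is exactly the paper's approach: run Wirtinger on the diagram of Figure~\ref{fig:Knot}, process the three twist regions one at a time, and eliminate the intermediate arc labels until a single two-generator one-relator presentation remains. The paper carries this out by naming the arcs explicitly ($a_i,b_i$ around the $2$-box, $c_i,d_i$ through the $2m$-box, $e_i,f_i$ through the $(2n+1)$-box), writing the induced relations, making the identifications between boxes, and then doing a short algebraic simplification of the resulting word $w$; what you have written is the outline of that same computation together with the correct heuristic for why commutator powers $(aba^{-1}b^{-1})^{\pm m}$ and the conjugated element $v$ must appear.

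What is missing is precisely the part you flag yourself: the actual bookkeeping. Your third paragraph asserts that the $2m$-box and the $2$-box together produce $v=c^{m}ac^{-m}b$ and that the $(2n+1)$-box then contributes $v^{n}$ and the tail $c^{m}ab$, but you do not verify any of this; in the paper this is exactly where the work lies (computing $a_2c_{m+1}$ in terms of $a,b$ and then rewriting $w=ab(a^{-1}b^{-1}ab)^{m}(a_2c_{m+1})^{n}$ into the stated form takes a nontrivial conjugation identity). As written, your proposal is a correct strategy plus a promise to do the labelling and a list of sanity checks, not a proof; to turn it into one you need to actually name the arcs, write the relations, and perform the simplification --- which is precisely what the paper does.
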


\begin{proof}
	\begin{figure}[h]
	\centering
	\includegraphics[scale=1]{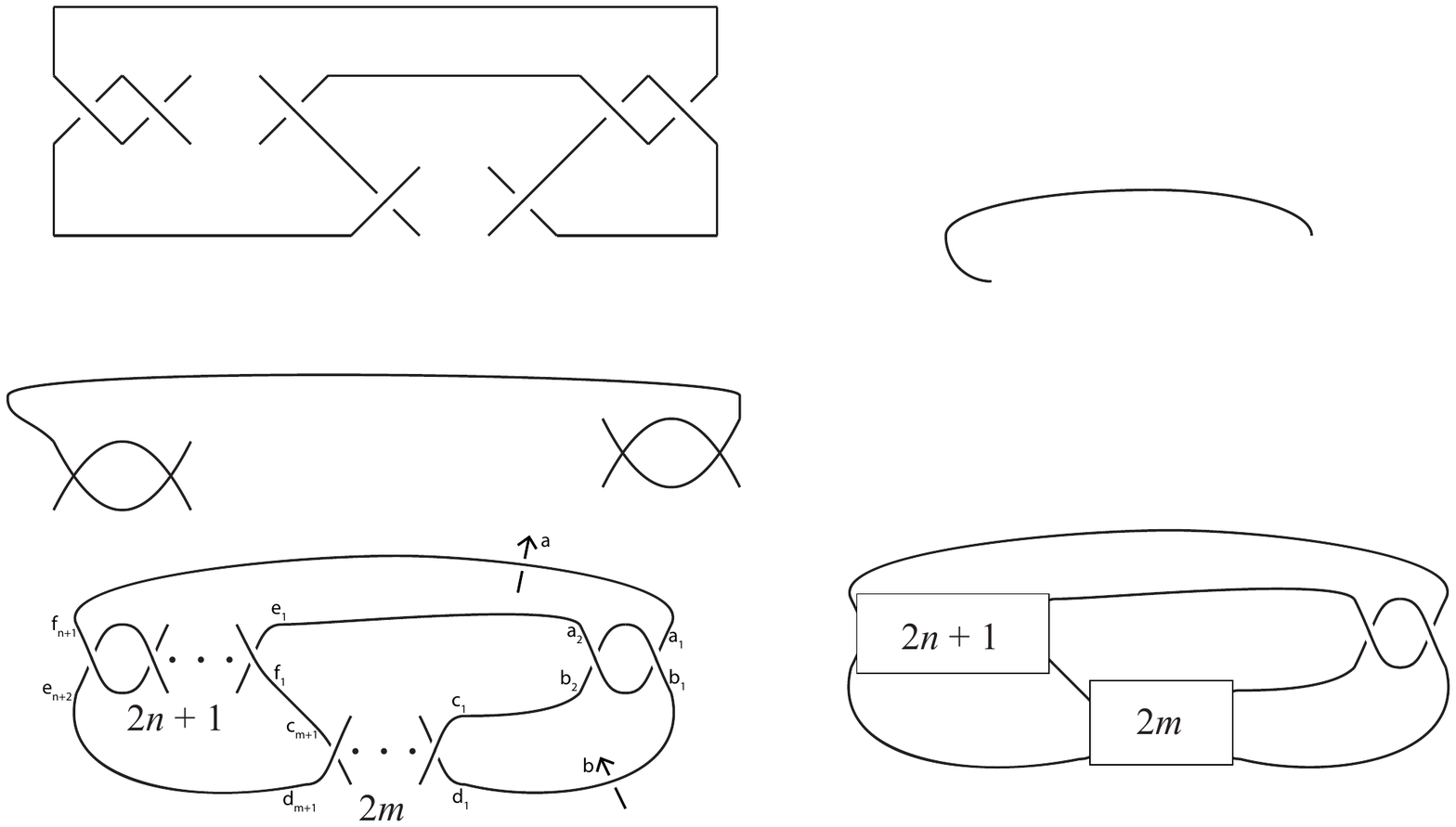}
	\caption{Generators $a,b$ of the knot group.}
	\label{fig:KnotGroup}
\end{figure}

Starting from the right hand side of the knot diagram, we have the following two relations:
\begin{align*}
	a_1 &= b_1 a_2 b_1^{-1}, \\
	b_1 &= a_2 b_2 a_2^{-1}.
\end{align*}

In the middle section of $2m$ crossings we have, by induction,
\begin{align*}
	c_{m+1} &= (d_1 c_1^{-1})^{-m} c_1 (d_1 c_1^{-1})^m, \\
	d_{m+1} &= (d_1 c_1^{-1})^{-m} d_1 (d_1 c_1^{-1})^m.
\end{align*}

In the left hand section of $2n + 1$ crossings we have, by induction,
\begin{align*}
	e_{n+1} &= (e_1 f_1)^{-n} e_1 (e_1 f_1)^n, \\
	f_{n+1} &= (e_1 f_1)^{-n} f_1 (e_1 f_1)^n.
\end{align*}

We have the following identifications:
\begin{gather*}
	a_1 = f_{n+1}, \\
	e_1 = a_2, \\
	f_1 = c_{m+1}, \\
	d_1 = b_1, \\
	c_1 = b_2.
\end{gather*}

Let $a_1 = a$ and $b_1 = b$. Using the identity $a = f_{n+1}$ and the relations listed above, we have
\[
	a = (a_2 c_{m+1})^{-n} (a^{-1} b^{-1} a b)^{-m} b^{-1} a^{-1} b a b (a^{-1} b^{-1} a b)^{m} (a_2 c_{m+1})^{n}.
\]
This implies that $wa = bw$ where
\[
	w = ab(a^{-1}b^{-1})^m(a_2 c_{m+1})^n.
\]
Writing $a_2 c_{m+1}$ in terms of $a$ and $b$, we have
\[
	a_2 c_{m+1} = b^{-1}ab(a^{-1}b^{-1}ab)^{-m} b^{-1}a^{-1}bab (a^{-1}b^{-1}ab)^m.
\]
Now we have
\begin{align*}
	w &= ab(a^{-1}b^{-1}ab)^m (b^{-1}ab(a^{-1}b^{-1}ab)^{-m} b^{-1}a^{-1}bab (a^{-1}b^{-1}ab)^m)^n \\
	&= (ab (a^{-1}b^{-1}ab)^m b^{-1} ab (a^{-1}b^{-1}ab)^{-m} b^{-1} a^{-1}b)^n (aba^{-1}b^{-1})^m ab \\
	&= ((aba^{-1}b^{-1})^m a (aba^{-1}b^{-1})^{-m} b)^n (aba^{-1}b^{-1})^m ab.
\end{align*}
This completes the proof of Proposition \ref{prop10}. 
\end{proof}

\subsection{Nonabelian representations} 

Suppose $\rho\colon G(K_{n,m})\to  \mathrm{SL}_2(\BC)$ is a nonabelian representation. 
Up to conjugation, we may assume that 
\begin{equation} \label{repn}
\rho(a) = \left[
\begin{array}{cc}
t & 1\\
0 & t^{-1}
\end{array}
\right] \quad \text{and} \quad \rho(b) = \left[
\begin{array}{cc}
t & 0\\
-u & t^{-1}
\end{array}
\right]
\end{equation}
where $(t,u) \in \BC^2$ satisfies the matrix equation $\rho(wa) = \rho(bw)$. It is known that this matrix equation is equivalent to a single polynomial equation $R_{K_{n,m}}(x,y) =0$, where $x= \tr \rho(a)=t+t^{-1}$, $y=\tr \rho(ab^{-1})=u+2$ and $R_K(x,y)$ is the Riley polynomial of a rational knot $K$, see \cite{Ri}. This polynomial can be computed via the formula $R_K = w_{11} + (t^{-1} - t)w_{12}$, where $w_{ij}$ is the $ij$-th entry of the matrix $\rho(w)$. For the rational knot $K_{n,m} = C(2n+1, 2m,2)$, the Riley polynomial can be described via the Chebyshev polynomials as follows. 

Let $S_k(z)$ be the Chebyshev polynomials of the second kind defined by $S_0(z)=1$, $S_1(z)=z$ and $S_{k}(z) = z S_{k-1}(z) - S_{k-2}(z)$ for all integers $k$. 

The following lemmas are elementary, see e.g \cite{Tr-torsion}.

\begin{lemma} \label{che}
For any integer $k$ we have $$S^2_k(z) + S^2_{k}(z) - z S_{k}(z) S_{k-1}(z) = S^2_k(z) - S_{k+1}(z) S_{k-1}(z)=1.$$
\end{lemma}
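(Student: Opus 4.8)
The plan is to treat the two displayed expressions separately: first show they are equal as polynomials using only the defining recurrence, and then show that their common value is the constant $1$. Throughout write $g(k) := S_k(z)^2 + S_{k-1}(z)^2 - z\,S_k(z)S_{k-1}(z)$ and $f(k) := S_k(z)^2 - S_{k+1}(z)S_{k-1}(z)$ (so the lemma asserts $g(k) = f(k) = 1$).

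For the first equality, apply the recursion $S_{k+1}(z) = z\,S_k(z) - S_{k-1}(z)$ to the single factor $S_{k+1}(z)$ appearing in $f(k)$:
\[
f(k) = S_k(z)^2 - \bigl(z\,S_k(z) - S_{k-1}(z)\bigr)S_{k-1}(z) = g(k),
\]
which is exactly the first equality in the lemma, valid for every integer $k$.

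To evaluate the common value I would show the invariance $f(k) = f(k-1)$ for every integer $k$. Expanding $f(k) = g(k)$ as above and expanding $f(k-1) = S_{k-1}(z)^2 - S_k(z)S_{k-2}(z)$ using $S_{k-2}(z) = z\,S_{k-1}(z) - S_k(z)$ (the recursion at index $k$), one checks that both expressions reduce to $S_k(z)^2 + S_{k-1}(z)^2 - z\,S_k(z)S_{k-1}(z)$. Hence $f$ is constant on $\BZ$. Finally $f(0) = S_0(z)^2 - S_1(z)S_{-1}(z)$; since $S_0(z) = 1$ and $S_{-1}(z) = 0$ (take $k=1$ in the recursion $S_k(z) = z\,S_{k-1}(z) - S_{k-2}(z)$, giving $z = z - S_{-1}(z)$), we get $f(0) = 1$, so $f(k) = g(k) = 1$ for all $k$.

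I expect essentially no real obstacle here: this is a short manipulation of the three-term recurrence. The only point deserving a little care is that the claim is made for all integers $k$, including negative ones; phrasing the inductive step as the invariance identity $f(k) = f(k-1)$ deals with $\BZ$ uniformly and avoids a separate descending induction. (Alternatively, one could run an ordinary two-sided induction from the base case $k = 0$, or $k = 1$, using the reflection symmetry $S_{-k}(z) = -S_{k-2}(z)$, which itself follows from the recurrence.)
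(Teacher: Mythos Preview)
Your argument is correct. The paper itself does not prove this lemma; it simply declares it ``elementary'' and refers the reader to \cite{Tr-torsion}, so there is nothing substantive to compare against. Your approach---use the recurrence once to identify $f(k)$ with $g(k)$, then show $f(k)=f(k-1)$ and evaluate at $k=0$ using $S_{-1}(z)=0$---is exactly the standard three-term-recurrence manipulation one expects here, and it handles all integers $k$ uniformly as you note. (You have also silently corrected the evident typo in the displayed statement: the first expression should read $S_k(z)^2 + S_{k-1}(z)^2 - z\,S_k(z)S_{k-1}(z)$, not $S_k(z)^2 + S_k(z)^2 - \cdots$; your $g(k)$ is the intended quantity, as confirmed by the paper's later use of the lemma in the proof of Lemma~\ref{=1}.)
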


\begin{lemma} \label{power}
Suppose $V = \left[ \begin{array}{cc}
e & f \\
g & h \end{array} \right] \in \mathrm{SL}_2(\BC)$ and $z= \tr V.$ For any integer $k$ we have 
\begin{eqnarray*}
V^k &=& \left[ \begin{array}{cc}
S_{k}(z) - h S_{k-1}(z) & f S_{k-1}(z) \\
g S_{k-1}(z) & S_{k}(z) - e S_{k-1}(z) \end{array} \right].
\end{eqnarray*}
\end{lemma}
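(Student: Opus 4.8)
The plan is to reduce the matrix power to a linear combination of $V$ and the identity, using the Cayley--Hamilton theorem, and then to read off the entries. Concretely, I would prove the auxiliary identity
\[
V^k = S_{k-1}(z)\,V - S_{k-2}(z)\,I \qquad \text{for every integer } k,
\]
where $I$ is the $2\times 2$ identity matrix, and then substitute $V = \left[\begin{array}{cc} e & f \\ g & h\end{array}\right]$ and rewrite using $\tr V = e+h = z$. The starting point is that $V\in \mathrm{SL}_2(\BC)$ has characteristic polynomial $\lambda^2 - z\lambda + 1$, so $V^2 = zV - I$ and hence $V^{-1} = zI - V$.

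For the auxiliary identity I would argue by induction in both directions from $k=0,1$. Recall that the recursion $S_k(z) = zS_{k-1}(z) - S_{k-2}(z)$ together with $S_0(z)=1$, $S_1(z)=z$ forces $S_{-1}(z)=0$ and $S_{-2}(z)=-1$, so the right-hand side of the displayed identity equals $V^0 = I$ at $k=0$ and $V^1 = V$ at $k=1$. For the step $k \to k+1$ with $k\ge 1$, I would left-multiply by $V$ and use $V^2 = zV-I$ to get $V^{k+1} = S_{k-1}(z)(zV-I) - S_{k-2}(z)V = S_k(z)V - S_{k-1}(z)I$. For the step $k \to k-1$ with $k\le 0$, I would left-multiply by $V^{-1} = zI - V$, reduce $V^2 = zV-I$ again, and simplify using $S_{k-1}(z) - zS_{k-2}(z) = -S_{k-3}(z)$, obtaining $V^{k-1} = S_{k-2}(z)V - S_{k-3}(z)I$. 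This covers all integers $k$.

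Finally, substituting the entries of $V$ gives
\[
V^k = \left[\begin{array}{cc} eS_{k-1}(z) - S_{k-2}(z) & fS_{k-1}(z) \\ gS_{k-1}(z) & hS_{k-1}(z) - S_{k-2}(z) \end{array}\right],
\]
and the claimed form follows by writing $e = z - h$ in the $(1,1)$-entry, so that $eS_{k-1}(z) - S_{k-2}(z) = zS_{k-1}(z) - S_{k-2}(z) - hS_{k-1}(z) = S_k(z) - hS_{k-1}(z)$, and symmetrically $h = z - e$ in the $(2,2)$-entry; the off-diagonal entries already match. There is no genuine obstacle in this argument; the only points requiring a little care are the backward (negative-index) induction and keeping the Chebyshev index conventions straight, in particular the normalizations $S_{-1}(z)=0$ and $S_{-2}(z)=-1$ that make the formula valid for all integers rather than just $k\ge 0$.
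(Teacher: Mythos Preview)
Your argument is correct. The Cayley--Hamilton relation $V^2 = zV - I$ and its consequence $V^{-1} = zI - V$ are exactly the right starting point, the two-sided induction establishing $V^k = S_{k-1}(z)V - S_{k-2}(z)I$ is carried out cleanly (including the negative-$k$ direction, where your simplification $S_{k-1}(z) - zS_{k-2}(z) = -S_{k-3}(z)$ is what is needed), and the final substitution using $e+h=z$ recovers the stated entries without issue.

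As for comparison: the paper does not actually give a proof of this lemma. It is stated as elementary with a reference to \cite{Tr-torsion}, so there is nothing to compare your approach against. Your write-up is the standard proof of this well-known formula for powers in $\mathrm{SL}_2$ and would serve perfectly well as the omitted argument.
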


\no{
Suppose $\rho: G(K_{n,m})\to SL_2(\BC)$ is a nonabelian representation. 
Up to conjugation we may assume that $$\rho(a) = \left[
\begin{array}{cc}
t & 1\\
0 & t^{-1}
\end{array}
\right] \quad \text{and} \quad \rho(b) = \left[
\begin{array}{cc}
t & 0\\
-u & t^{-1}
\end{array}
\right].$$ 
The Riley polynomial is $R = w_{11} + (t^{-1} - t)w_{12}$.
}

Let $\alpha = \tr \rho(aba^{-1}b^{-1})$ and $\beta = \tr \rho(v)$.

\begin{proposition} \label{ab}
We have
\begin{eqnarray*}
\alpha &=& y^2 - x^2 y + 2x^2-2,\\
\beta &=& 2 + (x^2-y-2)\big( S_m(\alpha)+(1-y)S_{m-1}(\alpha) \big)^2.
\end{eqnarray*}
\end{proposition}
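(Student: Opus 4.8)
The plan is to compute the two traces $\alpha = \tr\rho(aba^{-1}b^{-1})$ and $\beta = \tr\rho(v)$ directly from the explicit matrices in \eqref{repn}, using Lemma \ref{power} to handle the powers that appear.

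\textbf{Computing $\alpha$.} First I would multiply out $\rho(a)\rho(b)\rho(a)^{-1}\rho(b)^{-1}$ using \eqref{repn}, where $\rho(a)^{-1}$ and $\rho(b)^{-1}$ are obtained by the usual $\mathrm{SL}_2$ inversion (swap diagonal, negate off-diagonal). Taking the trace of the resulting $2\times 2$ matrix is a routine polynomial computation in $t,t^{-1},u$. One then rewrites it in terms of $x = t+t^{-1}$ and $y = u+2$; since the expression will only involve $t+t^{-1}$ (trace is conjugation-invariant and symmetric), this substitution is clean, and one checks it equals $y^2 - x^2 y + 2x^2 - 2$. As a sanity check, at $u=0$ (i.e.\ $y=2$) the commutator becomes trivial, and indeed $4 - 2x^2 + 2x^2 - 2 = 2$, the trace of the identity.

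\textbf{Computing $\beta$.} Recall $v = (aba^{-1}b^{-1})^m \, a \, (aba^{-1}b^{-1})^{-m} \, b$. Write $C = \rho(aba^{-1}b^{-1})$, which has trace $\alpha$. By Lemma \ref{power}, both $C^m$ and $C^{-m}$ are expressed via $S_m(\alpha)$, $S_{m-1}(\alpha)$ and the entries of $C$. The key structural observation to exploit is that $v = g \, a \, g^{-1} \cdot h$ where $g = C^m$ conjugates $a$, and then one multiplies by $b$; equivalently, $\rho(v) = C^m \rho(a) C^{-m} \rho(b)$. So the plan is: (i) form $C^m \rho(a) C^{-m}$ explicitly as a product of three matrices given in closed form, (ii) multiply on the right by $\rho(b)$, and (iii) take the trace. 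The term $x^2 - y - 2 = (t+t^{-1})^2 - u - 4 = t^2 + t^{-2} - 2 - u = (t-t^{-1})^2 - u$ should emerge naturally; note $x^2 - y - 2 = \tr\rho(ab^{-1}) \cdot(\text{something})$—more precisely it is related to the discriminant-type quantity measuring how far $\rho(a),\rho(b)$ are from commuting, and it vanishes exactly when $\rho$ is abelian, consistent with $\beta = 2$ in that degenerate case.

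\textbf{Main obstacle.} The genuinely delicate step is (ii)--(iii): showing the trace of $C^m \rho(a) C^{-m} \rho(b)$ collapses to $2 + (x^2-y-2)\big(S_m(\alpha) + (1-y)S_{m-1}(\alpha)\big)^2$. After inserting the Lemma \ref{power} formulas, the trace is a priori a messy expression quadratic in $S_m(\alpha)$ and $S_{m-1}(\alpha)$ with coefficients that are polynomials in the entries of $C$ and in $t,u$. To simplify it one will repeatedly need: the Cayley--Hamilton relation $C^2 = \alpha C - I$, the identity $S_k^2 - S_{k+1}S_{k-1} = 1$ from Lemma \ref{che} (to kill cross terms and constants), and the explicit entries of $C$ computed in the $\alpha$-step. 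The appearance of the perfect square $\big(S_m(\alpha)+(1-y)S_{m-1}(\alpha)\big)^2$ strongly suggests that $C^m\rho(a)C^{-m}\rho(b) - I$ has rank one (its trace being the square of a single matrix entry, up to the scalar $x^2-y-2$), so a cleaner route is to first show $C^m\rho(a)C^{-m}$ has a specific form — e.g.\ lower-triangular-like structure relative to $\rho(b)$ — making the rank-one phenomenon manifest before taking traces. I would pursue that structural simplification first, and fall back on brute-force trace expansion with the three identities above if needed. Once both formulas are verified, the proposition follows.
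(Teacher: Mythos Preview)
Your plan is correct and matches the paper's approach: both compute $\alpha$ by direct matrix multiplication of the commutator, then compute $\beta$ by writing $\rho((aba^{-1}b^{-1})^m) = \left[\begin{smallmatrix} e & f \\ g & h\end{smallmatrix}\right]$ via Lemma~\ref{power}, forming the product $C^m\rho(a)C^{-m}\rho(b)$, and taking the trace. The paper's simplification hinges not on Lemma~\ref{che} or Cayley--Hamilton but on two elementary facts about the entries: $eh-fg=1$ (the determinant) and the specific relation $uf-g=(t-t^{-1})(e-h)$, which together collapse the trace to $2+(t^2+t^{-2}-2-u)e^2$ with $e = S_m(\alpha)-(u+1)S_{m-1}(\alpha)$ --- exactly the perfect-square structure you anticipated.
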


\begin{proof}
By a direct calculation we have 
$$\rho(aba^{-1}b^{-1}) = \left[
\begin{array}{cc}
 u^2-u(t^2+t^{-2}-1)+1 & t(u-t^2+1)\\
 t^{-1}u(u-t^{-2}+1) & u+1 \\
\end{array}
\right].$$
Hence
$$\alpha = \tr \rho(aba^{-1}b^{-1}) = u^2-u(t^2+t^{-2}-2)+2.$$

By Lemma \ref{power} we have $\rho((aba^{-1}b^{-1})^m) = \left[
\begin{array}{cc}
 e & f \\
 g & h \\
\end{array}
\right]$ where
\begin{eqnarray*}
e &=& S_m(\alpha) - (u+1)S_{m-1}(\alpha), \\
f &=& t(u-t^2+1)S_{m-1}(\alpha), \\
g &=& t^{-1}u(u-t^{-2}+1)S_{m-1}(\alpha), \\
h &=& S_m(\alpha) - \big( u^2-u(t^2+t^{-2}-1)+1 \big)S_{m-1}(\alpha).
\end{eqnarray*}
By a direct calculation we have
\begin{eqnarray*}
\rho(v) &=& \rho \big( (aba^{-1}b^{-1})^m a (aba^{-1}b^{-1})^{-m} b \big) \\
           &=& \left[
\begin{array}{cc}
 e & f \\
 g & h \\
\end{array}
\right] \left[
\begin{array}{cc}
t & 1\\
0 & t^{-1}
\end{array}
\right] \left[
\begin{array}{cc}
 h & -f \\
 -g & e \\
\end{array}
\right] \left[
\begin{array}{cc}
t & 0\\
-u & t^{-1}
\end{array}
\right]
           = \left[
\begin{array}{cc}
 v_{11} & v_{12} \\
 v_{21} & v_{22} \\
\end{array}
\right]
\end{eqnarray*} where
\begin{eqnarray*}
v_{11} &=& t^2eh-teg-fg+ue((t-t^{-1})f-e), \\
v_{12} &=& -t^{-1}e((t-t^{-1})f-e), \\
v_{21} &=& tg((t-t^{-1})h-g)+u(tfg-t^{-1}eh-eg), \\
v_{22} &=& t^{-1}eg+t^{-2}eh-fg.
\end{eqnarray*}

With $\beta = \tr \rho(v) = v_{11} + v_{22}$ we have
\begin{eqnarray*}
\beta &=& (t^2 + t^{-2})eh -(t-t^{-1})eg-2fg  + ue((t-t^{-1})f-e) \\
      &=& 2(eh-fg) + e \big[ (t^2+t^{-2}-2)h +(t-t^{-1}) (uf-g)- u e\big].
\end{eqnarray*}
Since $eh-fg =1$ and $
uf -g =  (t-t^{-1}) u (u-t^2-t^{-2}) S_{m-1}(\alpha) = (t-t^{-1})(e-h),
$
we obtain $$\beta =  2 +(t^2+t^{-2}-2-u)e^2 = 2 + (t^2+t^{-2}-2-u) \big( S_m(\alpha)-(u+1)S_{m-1}(\alpha) \big)^2.$$ 

Finally, since $t^2 + t^{-2} = x^2-2$ and $u=y-2$, Proposition \ref{ab} follows.
\end{proof}

Let $R_{n,m}(x,y)$ denote the Riley polynomial of $C(2n+1, 2m, 2)$.

\begin{proposition} \label{R}
We have
$$
R_{n,m}(x,y) = \big( (x^2-y-1)S_m(\alpha)-S_{m-1}(\alpha) \big) S_n(\beta) - \big( (x^2-y-1)S_{m-1}(\alpha)-S_{m-2}(\alpha) \big) S_{n-1}(\beta).
$$
\end{proposition}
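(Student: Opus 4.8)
The plan is to exploit that the Riley polynomial is obtained from the relator word $w=v^{n}(aba^{-1}b^{-1})^{m}ab$ of Proposition \ref{prop10} by a \emph{linear} operation on matrices. Write $c=aba^{-1}b^{-1}$, so that $w=v^{n}c^{m}ab$, and for $M\in\mathrm{SL}_2(\BC)$ set $\phi(M)=M_{11}+(t^{-1}-t)M_{12}$; then $R_{n,m}=\phi(\rho(w))$ and $\phi$ is $\BC$-linear in $M$. The point is therefore to resolve $\rho(v)^{n}$ into a short linear combination of matrices whose $\phi$-values we can compute.

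First I would record, directly from Lemma \ref{power} applied to $V=\rho(v)$ (whose trace is $\beta$ by Proposition \ref{ab}), the identity $\rho(v)^{n}=S_n(\beta)I-S_{n-1}(\beta)\rho(v)^{-1}$; this is just a restatement of Lemma \ref{power}, since the matrix multiplying $S_{k-1}$ there equals $V^{-1}$. Multiplying on the right by $\rho(c^{m}ab)$ and applying $\phi$ gives
\[
R_{n,m}=S_n(\beta)\,\phi\big(\rho(c^{m}ab)\big)-S_{n-1}(\beta)\,\phi\big(\rho(v^{-1}c^{m}ab)\big),
\]
so the proposition reduces to the two identities $\phi(\rho(c^{m}ab))=(x^{2}-y-1)S_m(\alpha)-S_{m-1}(\alpha)$ and $\phi(\rho(v^{-1}c^{m}ab))=(x^{2}-y-1)S_{m-1}(\alpha)-S_{m-2}(\alpha)$.

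For the first identity I would take the explicit matrix $\rho(c^{m})$ with entries $e,f,g,h$ already computed in the proof of Proposition \ref{ab}, multiply it by $\rho(ab)=\rho(a)\rho(b)$, and expand $\phi$; after substituting $t^{2}+t^{-2}=x^{2}-2$ and $u=y-2$ the $S_m(\alpha)$-term comes out as $(x^{2}-y-1)S_m(\alpha)$ at once, while collapsing the $S_{m-1}(\alpha)$-coefficient to $1$ amounts to the elementary polynomial identity $(x^{2}-y-1)(u+1)+(u+1-t^{2})(u+1-t^{-2})=1$. For the second identity the trick is a word simplification: since $v=c^{m}ac^{-m}b$ we get $v^{-1}c^{m}ab=b^{-1}c^{m}a^{-1}\cdot ab=b^{-1}c^{m}b=(b^{-1}cb)^{m}=(b^{-1}aba^{-1})^{m}$. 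The matrix $D:=\rho(b^{-1}aba^{-1})$ is conjugate to $\rho(c)$, hence has trace $\alpha$, so Lemma \ref{power} gives $D^{m}=S_{m-1}(\alpha)D-S_{m-2}(\alpha)I$, and then $\phi(\rho(v^{-1}c^{m}ab))=S_{m-1}(\alpha)\phi(D)-S_{m-2}(\alpha)$ because $\phi(I)=1$; a one-line matrix computation gives $\phi(D)=t^{2}+t^{-2}-1-u=x^{2}-y-1$, which closes the argument after substituting back into the displayed formula.

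The only real obstacle is bookkeeping: carrying several $2\times2$ products correctly and translating between the $(t,u)$- and $(x,y)$-variables without error, especially verifying that the $S_{m-1}(\alpha)$-coefficient in the first identity really is $1$. The conceptual steps — linearity of $\phi$, the resolution of $\rho(v)^{n}$, and the word identity $v^{-1}c^{m}ab=(b^{-1}aba^{-1})^{m}$ — are all short, and everything else is routine verification.
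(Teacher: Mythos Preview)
Your proposal is correct, and the overall architecture---write $\rho(v)^n=S_n(\beta)I-S_{n-1}(\beta)\rho(v)^{-1}$, then extract the coefficients of $S_n(\beta)$ and $S_{n-1}(\beta)$ from $\phi(\rho(v)^n\rho(c^m ab))$---is the same one the paper implicitly uses: its entries $p,q$ are exactly the first-row entries of $S_n(\beta)I-S_{n-1}(\beta)\rho(v)^{-1}$, and the coefficient of $S_n(\beta)$ is obtained by the same computation $(t^2+t^{-2}-u-1)e-t^{-1}(u+1-t^{-2})f$ you describe.

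Where you genuinely diverge is in the treatment of the $S_{n-1}(\beta)$-coefficient. The paper stays entirely on the matrix side: it pulls in the explicit entries $v_{12},v_{22}$ of $\rho(v)$ computed in the proof of Proposition~\ref{ab}, evaluates $gv_{12}-ev_{22}$ and $hv_{12}-fv_{22}$, and then simplifies the resulting polynomial in $t,u$ using the relation $\alpha=u^2-u(t^2+t^{-2}-2)+2$ to turn an $(\alpha+u-t^2-t^{-2}+1)S_{m-1}$ into an $S_{m-2}$. You instead observe the group-theoretic identity $v^{-1}c^{m}ab=b^{-1}c^{m}b=(b^{-1}aba^{-1})^{m}$, so that the whole coefficient becomes $\phi(D^{m})$ for $D=\rho(b^{-1}aba^{-1})$, a matrix with trace $\alpha$; Cayley--Hamilton then gives $D^{m}=S_{m-1}(\alpha)D-S_{m-2}(\alpha)I$ and a one-line evaluation of $\phi(D)=t^2+t^{-2}-u-1=x^2-y-1$ finishes. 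This word trick is a neat shortcut: it bypasses the $v_{ij}$ entries entirely and makes the appearance of $S_{m-2}(\alpha)$ immediate rather than the outcome of a simplification using the formula for $\alpha$. The paper's route, on the other hand, requires no additional group-theoretic insight beyond the original presentation and proceeds uniformly by matrix bookkeeping. Either way the verifications you flag (the identity $(x^2-y-1)(u+1)+(u+1-t^2)(u+1-t^{-2})=1$ for the $S_{m-1}$-coefficient, and $\phi(D)=x^2-y-1$) are straightforward and correct.
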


\begin{proof}
By Lemma \ref{power} we have 
$\rho(v^n) = \left[
\begin{array}{cc}
 p & q \\
 r & s \\
\end{array}
\right]$ where 
\begin{eqnarray*}
p &=& S_n(\beta) - v_{22} S_{n-1}(\beta), \qquad q = v_{12} S_{n-1}(\beta),\\
r &=& v_{21} S_{n-1}(\beta), \qquad \qquad \quad \,\,\,\, s = S_n(\beta) - v_{11} S_{n-1}(\beta).
\end{eqnarray*}
By a direct calculation we have 
\begin{eqnarray*}
\rho(w) &=&  \rho(v^n (aba^{-1}b^{-1})^m ab) \\
            &=& \left[
\begin{array}{cc}
 p & q \\
 r & s \\
\end{array}
\right] \left[
\begin{array}{cc}
 e & f \\
 g & h \\
\end{array}
\right] \left[
\begin{array}{cc}
t & 1\\
0 & t^{-1}
\end{array}
\right] \left[
\begin{array}{cc}
t & 0\\
-u & t^{-1}
\end{array}
\right]
            = \left[
\begin{array}{cc}
 w_{11} & w_{12} \\
 w_{21} & w_{22} \\
\end{array}
\right]
\end{eqnarray*} 
where
\begin{eqnarray*}
w_{11} &=& (t^2-u)(ep + gq)-t^{-1}u(fp + hq),\\
w_{12} &=& t^{-2}(fp + hq) + t^{-1}(ep + gq),\\
w_{21} &=& (t^2-u)(er + gs)-t^{-1}u(fr + hs),\\
w_{22} &=& t^{-2}(fr + hs) + t^{-1}(er + gs).
\end{eqnarray*}
The Riley polynomial is
$$
R_{n,m} = w_{11} + (t^{-1} - t)w_{12} = (t^2+t^{-2}-u-1)(ep + gq)-t^{-1}(u+1-t^{-2})(fp + hq).
$$

Since $p = S_n(\beta) - v_{22} S_{n-1}(\beta)$ and $q = v_{12} S_{n-1}(\beta)$ we have
\begin{eqnarray*}
ep + gq &=& e (S_n(\beta) - v_{22} S_{n-1}(\beta)) + g v_{12} S_{n-1}(\beta) = e S_n(\beta) + (g v_{12} - e v_{22})S_{n-1}(\beta), \\
fp + hq &=&  f (S_n(\beta) - v_{22} S_{n-1}(\beta)) + h v_{12} S_{n-1}(\beta) = f S_n(\beta) + (h v_{12} - f v_{22})S_{n-1}(\beta).
\end{eqnarray*}
With $v_{22} = t^{-1}eg+t^{-2}eh-fg$ and $v_{12} = -t^{-1}e((t-t^{-1})f-e)$, we have
\begin{eqnarray*}
g v_{12} - e v_{22} &=& t^{-2} e(fg-eh) = -  t^{-2}e,\\
h v_{12} - f v_{22} &=& t^{-1}(eh-fg) (ft-e) = t^{-1}(ft-e).
\end{eqnarray*}
Hence 
\begin{eqnarray*}
R_{n,m} &=& \big[ (t^2+t^{-2}-u-1)e - t^{-1}(u+1-t^{-2}) f \big] S_n(\beta) \\
  && \qquad - \, t^{-2} \big[ (t^2+t^{-2}-u-1)e - (u+1-t^{-2}) (ft-e) \big] S_{n-1}(\beta) \\
  &=& \big[ (t^2+t^{-2}-u-1)e - t^{-1}(u+1-t^{-2}) f \big] S_n(\beta)\\
   && \qquad - \, \big[ e - t^{-1} (u+1-t^{-2}) f \big] S_{n-1}(\beta) .
\end{eqnarray*}
Then, with $e = S_m(\alpha) - (u+1)S_{m-1}(\alpha)$ and $f = t(u-t^2+1)S_{m-1}(\alpha)$, we obtain
\begin{eqnarray*}
R_{n,m} &=& \big[(t^2 + t^{-2} - u -1) S_m(\alpha) - S_{m-1}(\alpha) \big] S_n(\beta) \\
&& \qquad + \, \big[(u^2-(t^2+t^{-2}-3)u + t^2+t^{-2}-3)S_{m-1}(\alpha) - S_m(\alpha) \big] S_{n-1}(\beta).
\end{eqnarray*}

Since $\alpha = u^2-u(t^2+t^{-2}-2)+2$ we have
\begin{eqnarray*}
&& (u^2-(t^2+t^{-2}-3)u - t^2 - t^{-2}+3)S_{m-1}(\alpha) - S_m(\alpha) \\
&=& (\alpha + u - t^2- t^{-2} + 1) S_{m-1}(\alpha) - S_m(\alpha) \\
&=& S_{m-2}(\alpha) - (t^2 + t^{-2} - u -1) S_{m-1}(\alpha).
\end{eqnarray*}
The formula for $R_{n,m}(x,y)$ follows, since $t^2 + t^{-2} = x^2-2$ and $u=y-2$.
\end{proof}

\section{Properties of the Riley polynomial} \label{lem}

In this section we will prove some properties of the Chebychev polynomials of the second kind and Riley polynomials of the rational knots $C(2n+1, 2m,2)$. We will make use of these properties in Section \ref{LO}. 

\subsection{Chebychev polynomials} 

Recall that $S_k(z)$'s are the Chebyshev polynomials  defined by $S_0(z)=1$, $S_1(z)=z$ and $S_{k}(z) = z S_{k-1}(z) - S_{k-2}(z)$ for all integers $k$. 

\begin{lemma} \label{chev} The followings hold true:
       \begin{enumerate}
		\item For $k \ge 0$, the polynomial $S_k(z)$ has degree $k$ and leading term $z^k$.
		\item For $k \ge 1$ we have $S_{-k}(z) = -S_{k-2}(z)$. 
		\item $S_k(\pm 2) = (\pm 1)^k (k+1)$ and $S_{k}(z) = (t^{k+1} - t^{-k-1})/(t - t^{-1})$ if $z = t + t^{-1} \not= \pm 2$. In particular we have $S_k(2\cos\theta) = \frac{\sin(k+1)\theta}{\sin \theta}$ for $\theta \in \BR \setminus \BZ\pi$. 
	\end{enumerate}
\end{lemma}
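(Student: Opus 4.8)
The plan is to prove all three parts by induction, using only the defining recurrence $S_k(z) = z S_{k-1}(z) - S_{k-2}(z)$ (valid for \emph{every} integer $k$) together with the initial values $S_0 = 1$ and $S_1 = z$.

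For part (1) I would induct on $k \ge 0$: the cases $k = 0, 1$ are immediate, and for the inductive step $S_k = z S_{k-1} - S_{k-2}$, where by the inductive hypothesis $z S_{k-1}$ has degree $k$ with leading term $z^k$ while $S_{k-2}$ has degree $k - 2 < k$, so it cannot affect the top term. For part (2) I would first run the recurrence backwards to get the base values $S_{-1} = 0$ (from $S_1 = z S_0 - S_{-1}$) and $S_{-2} = -1$ (from $S_0 = z S_{-1} - S_{-2}$), which match $-S_{-1} = 0$ and $-S_0 = -1$; then, assuming $S_{-k} = -S_{k-2}$ and $S_{-(k-1)} = -S_{k-3}$, rewrite the recurrence as $S_{-(k+1)} = z S_{-k} - S_{-(k-1)}$ and substitute to obtain $S_{-(k+1)} = -(z S_{k-2} - S_{k-3}) = -S_{k-1}$, closing the induction. (Equivalently: both $k \mapsto S_{-k}$ and $k \mapsto -S_{k-2}$ solve the same second-order recurrence and agree at $k = 1, 2$.)

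For part (3) I would handle the generic case $z = t + t^{-1} \ne \pm 2$ (so $t \ne \pm 1$) by checking that $f_k := (t^{k+1} - t^{-k-1})/(t - t^{-1})$ satisfies $f_0 = 1$, $f_1 = z$, and $z f_{k-1} - f_{k-2} = f_k$ — the last a one-line identity after clearing the common denominator $t - t^{-1}$ — so $f_k = S_k(z)$ for all integers $k$ by uniqueness of solutions with prescribed initial data. The values at $z = \pm 2$ I would get by a separate direct induction: $S_k(2) = 2S_{k-1}(2) - S_{k-2}(2)$ gives $k+1 = 2k - (k-1)$, and $S_k(-2) = -2S_{k-1}(-2) - S_{k-2}(-2)$ gives $(-1)^k(k+1)$ from $(-1)^{k-1}k$ and $(-1)^k(k-1)$ (alternatively, pass to the limit $t \to \pm 1$ in the generic formula). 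The trigonometric identity then follows by putting $t = e^{i\theta}$, which sends $t + t^{-1} \mapsto 2\cos\theta$ and $(t^{k+1} - t^{-k-1})/(t - t^{-1}) \mapsto \sin((k+1)\theta)/\sin\theta$, valid exactly when $\sin\theta \ne 0$, i.e. $\theta \notin \BZ\pi$.

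There is no real obstacle here: the only point that needs a little care is the two-sided nature of the recurrence in parts (2) and (3), so that the inductions reaching into negative indices are anchored by the correct pair of consecutive base cases; everything else is routine verification.
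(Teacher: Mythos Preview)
Your proposal is correct and follows the same approach as the paper: the paper's own proof simply states that all the equalities can be proved by induction on $k \ge 0$, and notes the substitution $2\cos\theta = e^{i\theta} + e^{-i\theta}$ for the final identity. You have supplied exactly that induction in full detail, so there is nothing to add.
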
 

\begin{proof}
All the equalities in the lemma can be proved by induction on $k \ge 0$. For the last equality, note that $2\cos\theta = e^{i\theta} + e^{-i\theta}$. 
\end{proof}

\begin{lemma} \label{root}
Suppose $k \ge 1$. Then the polynomial $S_k (z) - S_{k-1} (z)=0$ has $k$ roots given by $z = 2 \cos \theta_j$ where $\theta_j = \frac{(2j -1) \pi}{2k + 1}$ for $1 \leq j \leq k$. Moreover, at $z = 2 \cos \theta_j$ we have $(-1)^{j+1} S_k(z) > 0$.
\end{lemma}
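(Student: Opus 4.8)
The plan is to reduce everything to trigonometry on the interval $(-2,2)$ via the substitution $z = 2\cos\theta$, using the closed form $S_k(2\cos\theta) = \sin((k+1)\theta)/\sin\theta$ from Lemma \ref{chev}(3). First I would rewrite, for $\theta \in \BR \setminus \BZ\pi$,
\[
S_k(2\cos\theta) - S_{k-1}(2\cos\theta) = \frac{\sin((k+1)\theta) - \sin(k\theta)}{\sin\theta},
\]
and then apply the sum-to-product identities $\sin((k+1)\theta) - \sin(k\theta) = 2\cos\bigl(\tfrac{(2k+1)\theta}{2}\bigr)\sin\bigl(\tfrac{\theta}{2}\bigr)$ and $\sin\theta = 2\sin\tfrac{\theta}{2}\cos\tfrac{\theta}{2}$ to see that this equals $\cos\bigl(\tfrac{(2k+1)\theta}{2}\bigr)/\cos\tfrac{\theta}{2}$. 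Restricting to $\theta \in (0,\pi)$, where $\cos\tfrac{\theta}{2} \neq 0$, the equation $S_k(z) - S_{k-1}(z) = 0$ becomes $\cos\bigl(\tfrac{(2k+1)\theta}{2}\bigr) = 0$, whose solutions in $(0,\pi)$ are exactly $\theta = \theta_j := \tfrac{(2j-1)\pi}{2k+1}$ for $1 \le j \le k$; these produce $k$ distinct numbers $z_j = 2\cos\theta_j \in (-2,2)$.

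Next I would argue these are all the roots. By Lemma \ref{chev}(1), $S_k$ has degree $k$ with leading term $z^k$ and $S_{k-1}$ has degree $k-1$, so $S_k(z) - S_{k-1}(z)$ is a monic polynomial of degree $k$; having exhibited $k$ distinct zeros $z_1,\dots,z_k$, we have found all of them.

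For the sign statement, at $z = z_j$ we have $S_k(z_j) = \sin((k+1)\theta_j)/\sin\theta_j$ by Lemma \ref{chev}(3). From $2(k+1)\theta_j = (2k+1)\theta_j + \theta_j = (2j-1)\pi + \theta_j$ we get $(k+1)\theta_j = \tfrac{(2j-1)\pi}{2} + \tfrac{\theta_j}{2}$, and since $\sin\bigl(\tfrac{(2j-1)\pi}{2} + x\bigr) = (-1)^{j+1}\cos x$, this gives $\sin((k+1)\theta_j) = (-1)^{j+1}\cos\tfrac{\theta_j}{2}$. Hence $(-1)^{j+1} S_k(z_j) = \cos(\theta_j/2)/\sin\theta_j$, which is positive because $\theta_j \in (0,\pi)$ forces both $\cos(\theta_j/2) > 0$ and $\sin\theta_j > 0$.

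I do not anticipate a real obstacle; the lemma is elementary. The only points needing a moment's care are checking that the reduction to $\cos(\tfrac{(2k+1)\theta}{2})/\cos\tfrac{\theta}{2}$ is legitimate on $(0,\pi)$ (immediate, since $\tfrac{\theta}{2} \in (0,\tfrac{\pi}{2})$) and counting precisely which integers $j$ yield $\theta_j \in (0,\pi)$; the remainder is a routine Dirichlet-kernel-type trigonometric computation.
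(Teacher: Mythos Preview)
Your proof is correct and follows essentially the same approach as the paper: both use the closed form $S_k(2\cos\theta)=\sin((k+1)\theta)/\sin\theta$ from Lemma~\ref{chev}(3), verify that the $k$ values $\theta_j=\tfrac{(2j-1)\pi}{2k+1}$ give roots, and invoke the degree of $S_k-S_{k-1}$ to conclude these are all of them. The paper evaluates $S_k$ and $S_{k-1}$ separately at each $\theta_j$ (obtaining $\tfrac{(-1)^{j+1}}{2\sin(\theta_j/2)}$ for both), whereas you first simplify the difference to $\cos\bigl(\tfrac{(2k+1)\theta}{2}\bigr)/\cos\tfrac{\theta}{2}$ and then solve; this is a cosmetic variation, and your sign computation for $(-1)^{j+1}S_k(z_j)$ matches the paper's exactly.
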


\begin{proof}
By Lemma \ref{chev}(3), at $z = 2 \cos \theta_j$ we have 
	\begin{align*}
	S_k(z) &= \frac{\sin (k+1)\theta_j}{\sin \theta_j} =  \frac{\sin ((j-\frac{1}{2})\pi  + \frac{\theta_j}{2})}{\sin \theta_j} = \frac{(-1)^{j+1} \cos \frac{\theta_j}{2}}{\sin \theta_j} = \frac{(-1)^{j+1}}{2\sin \frac{\theta_j}{2}}, \\
	S_{k-1}(z) &= \frac{\sin k \theta_j}{\sin \theta_j} =  \frac{\sin ((j-\frac{1}{2})\pi  - \frac{\theta_j}{2})}{\sin \theta_j} = \frac{(-1)^{j+1} \cos \frac{\theta_j}{2}}{\sin \theta_j} = \frac{(-1)^{j+1}}{2\sin \frac{\theta_j}{2}}.
	\end{align*}
This implies that $S_k (z) = S_{k-1} (z)$ and $(-1)^{j+1} S_k(z) > 0$. Hence the polynomial $S_k (z) - S_{k-1} (z)$ has at least $k$ roots given by  $z = 2 \cos \theta_j$ for $1 \leq j \leq k$. Since the degree of $S_k (z) - S_{k-1} (z)$ is exactly $k$, those are all the roots. 
\end{proof}

\subsection{Riley poynomial} 

Recall from Propositions \ref{ab} and \ref{R} that 
\begin{eqnarray*}
\alpha &=& y^2 - x^2 y + 2x^2-2,\\
\beta &=& 2 + (x^2-y-2)\big( S_m(\alpha)+(1-y)S_{m-1}(\alpha) \big)^2.
\end{eqnarray*}
and
	$$
R_{n,m}(x,y) = \big( (x^2-y-1)S_m(\alpha)-S_{m-1}(\alpha) \big) S_n(\beta) - \big( (x^2-y-1)S_{m-1}(\alpha)-S_{m-2}(\alpha) \big) S_{n-1}(\beta).
$$

\begin{lemma} \label{limsign}
	For any fixed number $x \in \BR$, we have 
	$$
	\lim_{y \to \infty} (-1)^n R_{n,m}(x,y) =
	\begin{cases}  
	- \infty &\mbox{if } n \ge 1, m \ge 1, \\ 
         \infty &\mbox{if } n \ge 1, m \le -1, \\
        \infty &\mbox{if } n \le -1, m \ge 1, \\ 
        -\infty &\mbox{if } n \le -1, m \le -1. 
        \end{cases}
        $$
\end{lemma}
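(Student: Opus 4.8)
The plan is to track the leading-order behavior in $y$ of each ingredient entering $R_{n,m}(x,y)$ as $y\to\infty$, with $x$ fixed. First I would determine the degree and leading coefficient of $\alpha = y^2 - x^2y + 2x^2 - 2$ as a polynomial in $y$: it has degree $2$ with positive leading coefficient, so $\alpha\to+\infty$. By Lemma \ref{chev}(1), $S_k(\alpha)$ then has degree $2k$ in $y$ with leading coefficient $1$ for $k\ge 0$ (and for negative indices one uses Lemma \ref{chev}(2) to reduce to the $k\ge0$ case). Next I would compute the leading term of $\beta$. Inside the square in the formula for $\beta$, the expression $S_m(\alpha) + (1-y)S_{m-1}(\alpha)$ has, when $m\ge 1$, leading term coming from $-y\cdot S_{m-1}(\alpha)$, i.e. degree $1 + 2(m-1) = 2m-1$ with coefficient $-1$; squaring gives degree $4m-2$ with coefficient $+1$. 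Multiplying by $(x^2 - y - 2)$, whose leading term in $y$ is $-y$, we find $\beta$ has degree $4m-1$ in $y$ with leading coefficient $-1$, hence $\beta\to -\infty$; for $m\le -1$ the analogous computation (again via Lemma \ref{chev}(2)) must be carried out, and one should check the sign of the leading coefficient of $\beta$ in that regime as well.

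Once the sign and (parity of the) degree of $\beta$ in $y$ are known, I would feed this into $S_n(\beta)$ and $S_{n-1}(\beta)$. Since $\beta\to\pm\infty$ with a definite leading power of $y$, $S_n(\beta)$ behaves like $\beta^n$ to leading order (for $n\ge 0$), so its sign as $y\to\infty$ is governed by $(\mathrm{sign}\,\beta)^n$ together with the parity of $n\cdot(\deg_y\beta)$; for $n\le -1$ one rewrites $S_n(\beta) = -S_{-n-2}(\beta)$ first. Crucially, in $R_{n,m}$ the two terms are
$$
\big((x^2-y-1)S_m(\alpha) - S_{m-1}(\alpha)\big)S_n(\beta) \quad\text{and}\quad \big((x^2-y-1)S_{m-1}(\alpha)-S_{m-2}(\alpha)\big)S_{n-1}(\beta),
$$
and since $\deg_y\beta\ge 3$ while the bracketed Chebyshev-in-$\alpha$ factors differ in $y$-degree by exactly $2$, the first term dominates: $\deg_y S_n(\beta) - \deg_y S_{n-1}(\beta) = \deg_y\beta$, which exceeds the degree gap $2$ between the two Chebyshev-in-$\alpha$ brackets. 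So the overall leading term of $R_{n,m}$ is the leading term of $(x^2-y-1)S_m(\alpha)\,S_n(\beta)$ (with the correction $-S_{m-1}(\alpha)S_n(\beta)$ irrelevant, being of lower degree). Then $(-1)^n R_{n,m}$ has leading $y$-coefficient equal to $(-1)^n$ times the product of: the leading coefficient of $(x^2-y-1)S_m(\alpha)$ (namely $-1$, from the $-y$), and the leading coefficient of $S_n(\beta)$, and I would read off the four cases from the sign of that coefficient combined with the parity of the total degree of this leading term in $y$.

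I expect the main obstacle to be the bookkeeping in the cases $m\le -1$ and $n\le -1$: one must apply $S_{-k}(z) = -S_{k-2}(z)$ carefully (noting the index shift), recompute the degree and leading coefficient of $S_m(\alpha)+(1-y)S_{m-1}(\alpha)$ — here for $m\le -1$ the term $S_m(\alpha)$ may compete with $-yS_{m-1}(\alpha)$ in degree, so the leading coefficient of $\beta$ needs genuine care — and then reconcile the parity of $n\cdot\deg_y\beta + \deg_y\!\big((x^2-y-1)S_m(\alpha)\big)$ with the sign of $(-1)^n$ to land on $+\infty$ versus $-\infty$. A clean way to organize this is to write $\deg_y\beta$ and its leading sign as explicit functions of $m$ in the two regimes $m\ge 1$ and $m\le -1$, then observe that the final answer depends only on $\mathrm{sign}(mn)$, which matches the four-case split in the statement. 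The degenerate possibility that the leading coefficient of $\beta$ (or of the relevant bracket) vanishes should be ruled out by checking it is $\pm1$, never $0$.
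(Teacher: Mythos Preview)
Your approach is essentially identical to the paper's: track leading terms of $\alpha$, then of the Chebyshev expressions in $\alpha$, then of $\beta$, then of $S_n(\beta)$, and verify that the first summand of $R_{n,m}$ dominates. There is, however, one computational slip worth correcting. For $m\ge 1$ you assert that the leading term of $S_m(\alpha)+(1-y)S_{m-1}(\alpha)$ comes from $-y\,S_{m-1}(\alpha)$ and hence has $y$-degree $2m-1$; in fact $S_m(\alpha)$ itself has $y$-degree $2m$ (since $\deg_y\alpha=2$ and $S_m$ has degree $m$), which exceeds $2m-1$, so the leading term is $y^{2m}$ with coefficient $+1$. Consequently $\beta$ has leading term $-y^{4m+1}$, not $-y^{4m-1}$. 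Fortunately this changes neither the sign nor the parity of the degree of $\beta$, so your downstream sign analysis survives unchanged; the paper records the leading term of $\beta$ uniformly as $-y^{|4m+1|}$ in both regimes $m\ge 1$ and $m\le -1$. With this correction your argument matches the paper's proof line by line.
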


\begin{proof}
	We will prove the lemma for the case $n \ge 1$. The case $n \le -1$ is proved similarly. 
	
	By Lemma \ref{chev}(1), when $k \ge 0$ the leading term of $S_{k}(z)$ is $z^k$. Moreover, by Lemma \ref{chev}(2) we have $S_{-1}(z)=0$ and $S_{-k}(z) = -S_{k-2}(z)$ for $k \ge 2$. Hence when $k \ge 2$ the leading term of $S_{-k}(z)$ is $-z^{k-2}$. 
	
	Fix $x \in \BR$. Since the leading term of $\alpha$ is $y^2$, we see that the leading term of $(x^2-y-1)S_m(\alpha)-S_{m-1}(\alpha)$ is 
	$
	\begin{cases}  
        -y^{2m+1}  &\mbox{if } m \ge 1, \\ 
        y^{-2(m+1)}  &\mbox{if } m \le -1.
        \end{cases}
        $ 
        
        Similarly, the leading term of $(x^2-y-1)S_{m-1}(\alpha)-S_{m-2}(\alpha)$ is 
	$
	\begin{cases}  
        -y^{2m-1}  &\mbox{if } m \ge 1, \\ 
        y^{-2m}  &\mbox{if } m \le -1,
        \end{cases}
        $
        and the leading term of $\beta = 2 + (x^2-y-2)\big( S_m(\alpha)+(1-y)S_{m-1}(\alpha) \big)^2$ is $-y^{|4m+1|}$.
        
        Since $n \ge 1$, the leading term of $S_n(\beta)$ is equal to that of $\beta^n$, which is $(-y^{|4m+1|})^n$. 
        Similarly, the leading term of $S_{n-1}(\beta)$ is equal to that of $\beta^{n-1}$, which is $(-y^{|4m+1|})^{n-1}$. 
        
        If $m \ge 1$, the leading term of $R_{n,m}(x,y)$ is equal to that of $\big( (x^2-y-1)S_m(\alpha)-S_{m-1}(\alpha) \big) S_n(\beta)$, which is $-y^{2m+1} (-y^{4m+1})^n =  (-1)^{n+1} y^{2m+1 + (4m+1) n}$. 
        
        If $m \le -1$, the leading term of $R_{n,m}(x,y)$ is equal to that of $\big( (x^2-y-1)S_m(\alpha)-S_{m-1}(\alpha) \big) S_n(\beta)$, which is $y^{-2(m+1)} (-y^{-4m-1})^n =  (-1)^{n} y^{-2(m+1) + (-4m-1) n}$. 
        
        This completes the proof of the lemma for the case $n \ge 1$. 
	\end{proof}
	
\begin{lemma} \label{simplify}
Suppose $S_n(\beta) = S_{n-1}(\beta)$. Then 
$$
R_{n,m}(x,y)= (x^2 - y - 2) [S_m (\alpha) + (1 - y) S_{m-1} (\alpha)] S_n (\beta).
$$
\end{lemma}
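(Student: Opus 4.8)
The plan is to substitute the hypothesis $S_n(\beta) = S_{n-1}(\beta)$ directly into the formula for $R_{n,m}(x,y)$ from Proposition \ref{R} and simplify. Writing $P = (x^2-y-1)S_m(\alpha) - S_{m-1}(\alpha)$ and $Q = (x^2-y-1)S_{m-1}(\alpha) - S_{m-2}(\alpha)$, we have $R_{n,m} = P\,S_n(\beta) - Q\,S_{n-1}(\beta)$, so under the hypothesis this collapses to $R_{n,m} = (P - Q)\,S_n(\beta)$. Thus the whole task reduces to showing that
\[
P - Q = (x^2-y-1)\big(S_m(\alpha) - S_{m-1}(\alpha)\big) - \big(S_{m-1}(\alpha) - S_{m-2}(\alpha)\big)
\]
equals $(x^2-y-2)\big(S_m(\alpha) + (1-y)S_{m-1}(\alpha)\big)$.

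First I would expand $P - Q$ and group it as $(x^2-y-1)S_m(\alpha) - (x^2-y)S_{m-1}(\alpha) + S_{m-2}(\alpha)$. The target expression expands to $(x^2-y-2)S_m(\alpha) + (x^2-y-2)(1-y)S_{m-1}(\alpha)$. Subtracting, the claim $P - Q = (x^2-y-2)\big(S_m(\alpha)+(1-y)S_{m-1}(\alpha)\big)$ becomes, after cancelling the common $S_m(\alpha)$-terms and rearranging, an identity of the form $c\, S_m(\alpha) + S_{m-2}(\alpha) = d\, S_{m-1}(\alpha)$ for explicit polynomials $c,d$ in $x,y$. The natural tool here is the Chebyshev recursion $S_{m-2}(\alpha) = \alpha S_{m-1}(\alpha) - S_m(\alpha)$ (from the defining relation $S_k = zS_{k-1} - S_{k-2}$ rearranged), which turns the left side into $(c-1)S_m(\alpha) + \alpha S_{m-1}(\alpha)$; matching coefficients then forces $c = 1$ and $\alpha = d$, i.e. the whole identity should reduce to the single scalar equation $\alpha = y^2 - x^2 y + 2x^2 - 2$, which is precisely the value of $\alpha$ given in Proposition \ref{ab}.

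So the key steps, in order, are: (i) substitute $S_n(\beta)=S_{n-1}(\beta)$ into Proposition \ref{R} to get $R_{n,m} = (P-Q)S_n(\beta)$; (ii) use the Chebyshev recurrence to eliminate $S_{m-2}(\alpha)$ from $P - Q$; (iii) substitute $\alpha = y^2 - x^2y + 2x^2 - 2$ and collect the coefficients of $S_m(\alpha)$ and $S_{m-1}(\alpha)$; (iv) verify these coefficients match $(x^2-y-2)$ and $(x^2-y-2)(1-y)$ respectively. The only real obstacle is bookkeeping — making sure the substitution of $\alpha$ and the recurrence are applied consistently — but there is no conceptual difficulty; it is a direct algebraic identity once the hypothesis is used to kill the $S_{n-1}(\beta)$ term. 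I would present step (i) explicitly and then say "a direct calculation using the Chebyshev recurrence and $\alpha = y^2-x^2y+2x^2-2$ gives $P - Q = (x^2-y-2)(S_m(\alpha)+(1-y)S_{m-1}(\alpha))$," which completes the proof.
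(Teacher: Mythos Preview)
Your proposal is correct and follows essentially the same approach as the paper: collapse $R_{n,m}$ to $(P-Q)S_n(\beta)$ using the hypothesis, eliminate $S_{m-2}(\alpha)$ via the Chebyshev recurrence, and then invoke the explicit formula $\alpha = y^2 - x^2y + 2x^2 - 2$ to finish. The paper organizes the algebra slightly differently (it factors the scalar part as $\alpha - 2 = (y-2)(y+2-x^2)$ and $x^2 - y - \alpha = (1-y)(y+2-x^2)$ and pulls out the common factor $(y+2-x^2)$), but the ingredients and the logical structure are the same. One small wording caution: where you say ``matching coefficients then forces $c=1$,'' you are not really matching coefficients of independent quantities --- you are computing $c = (x^2-y-1)-(x^2-y-2) = 1$ directly, after which the remaining scalar identity $\alpha = d$ is verified by expanding $d$; phrase it that way in the write-up to avoid any ambiguity.
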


\begin{proof}
If $S_n(\beta) = S_{n-1}(\beta)$ then
	\begin{align*}
		R_{n,m}(x,y) &= [(x^2 - y - 1) S_m (\alpha) - S_{m-1} (\alpha) - (x^2 - y - 1) S_{m-1} (\alpha) + S_{m-2} (\alpha)] S_n (\beta) \\
		&= [(x^2 - y - 1) (S_m (\alpha) - S_{m-1} (\alpha) )+ (\alpha -1)S_{m-1} (\alpha)  - S_{m} (\alpha)]  S_n (\beta) \\
					&= [(x^2 - y - 1) (S_m (\alpha) - S_{m-1} (\alpha)) \\ 
					&\hspace{3cm} - (\alpha - 1) (S_m (\alpha) - S_{m-1} (\alpha)) + (\alpha - 2) S_m (\alpha)] S_n (\beta) \\
					&= [(\alpha - 2) S_m (\alpha) + (x^2 - y - \alpha) (S_m (\alpha) - S_{m-1} (\alpha))] S_n (\beta).
	\end{align*}

Since $\alpha = y^2 - x^2 y + 2x^2-2$, we have $\alpha - 2 = (y-2)(y+2-x^2)$ and $x^2-y-\alpha = (1-y)(y+2-x^2)$. Hence 
\begin{align*}
R_{n,m}(x,y) &= (y+2-x^2) [(y-2) S_m(\alpha) + (1-y) (S_m (\alpha) - S_{m-1} (\alpha))] \\
	&= (x^2 - y - 2) [S_m (\alpha) + (1 - y) S_{m-1} (\alpha)] S_n (\beta).
\end{align*}
\end{proof}

\begin{lemma} \label{y=2}
Suppose $y = 2$. Then $\alpha = 2$, $\beta= x^2-2$ and
$$
R_{n,m}(x,y)= m (\beta-2) (S_n(\beta) - S_{n-1}(\beta)) + S_{n+1}(\beta) -  S_n(\beta).
$$
Moreover, if we also have $x=2\cos\theta$ for some $\theta \in \BR \setminus \BZ\frac{\pi}{2}$ then 
$$
R_{n,m}(x,y) = m ( 2\cos 2\theta-2) \frac{\cos (2n+1)\theta}{\cos \theta} + \frac{\cos (2n+3)\theta}{\cos\theta}.
$$
\end{lemma}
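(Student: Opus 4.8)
The plan is a direct two-stage computation, substituting $y=2$ into the explicit expressions for $\alpha$, $\beta$ and $R_{n,m}$ recorded just before the lemma (from Propositions \ref{ab} and \ref{R}).

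First I would evaluate $\alpha$ and $\beta$. Setting $y=2$ in $\alpha = y^2 - x^2 y + 2x^2 - 2$ gives $\alpha = 2$ at once. By Lemma \ref{chev}(3) we have $S_k(2) = k+1$ for every integer $k$ (for negative $k$ this follows from Lemma \ref{chev}(2)), so $S_m(\alpha) + (1-y)S_{m-1}(\alpha) = (m+1) - m = 1$; feeding this into $\beta = 2 + (x^2-y-2)\big(S_m(\alpha)+(1-y)S_{m-1}(\alpha)\big)^2$ yields $\beta = x^2 - 2$.

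Next I would compute $R_{n,m}$. With $y=2$ and $\alpha = 2$, the two coefficients $(x^2-y-1)S_m(\alpha)-S_{m-1}(\alpha)$ and $(x^2-y-1)S_{m-1}(\alpha)-S_{m-2}(\alpha)$ appearing in Proposition \ref{R} become quadratics in $x$; using $x^2 = \beta + 2$ I would rewrite them as $(m+1)\beta - 2m - 1$ and $m\beta - 2m + 1$ respectively. Substituting into the formula for $R_{n,m}$ and separating the terms proportional to $m$ from the rest produces $m(\beta - 2)\big(S_n(\beta) - S_{n-1}(\beta)\big)$ together with $\beta S_n(\beta) - S_n(\beta) - S_{n-1}(\beta)$; the Chebyshev recurrence $S_{n+1}(\beta) = \beta S_n(\beta) - S_{n-1}(\beta)$, valid for all integers $n$, turns the latter into $S_{n+1}(\beta) - S_n(\beta)$, which is the first displayed identity.

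For the specialization $x = 2\cos\theta$ I would observe $\beta = 4\cos^2\theta - 2 = 2\cos 2\theta$, so $\beta - 2 = 2\cos 2\theta - 2$. Since $\theta \notin \BZ\frac{\pi}{2}$ we have $2\theta \notin \BZ\pi$, so Lemma \ref{chev}(3) gives $S_k(\beta) = \frac{\sin(2k+2)\theta}{\sin 2\theta}$. Then the sum-to-product identity $\sin A - \sin B = 2\cos\frac{A+B}{2}\sin\frac{A-B}{2}$ together with $\sin 2\theta = 2\sin\theta\cos\theta$ collapse $S_n(\beta) - S_{n-1}(\beta)$ to $\frac{\cos(2n+1)\theta}{\cos\theta}$ and $S_{n+1}(\beta) - S_n(\beta)$ to $\frac{\cos(2n+3)\theta}{\cos\theta}$; substituting into the first identity gives the second. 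There is no genuine obstacle here — the argument is entirely bookkeeping — and the only point requiring care is applying the Chebyshev identities of Lemma \ref{chev} with the correct index range, since $m$ and $n$ may be negative and the sine formula requires the half-angle of the argument to avoid $\BZ\pi$.
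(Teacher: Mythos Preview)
Your proposal is correct and follows essentially the same route as the paper: substitute $y=2$ to get $\alpha=2$ and $S_k(\alpha)=k+1$, deduce $\beta=x^2-2$, plug into the formula of Proposition \ref{R}, separate the $m$-dependent part from the rest, and apply the Chebyshev recurrence to obtain $S_{n+1}(\beta)-S_n(\beta)$; the trigonometric specialization via $S_k(2\cos 2\theta)=\frac{\sin(2k+2)\theta}{\sin 2\theta}$ and sum-to-product is also exactly what the paper does. The only cosmetic difference is that the paper writes the two coefficients as $(\beta-1)(m+1)-m$ and $(\beta-1)m-(m-1)$ rather than your expanded forms $(m+1)\beta-2m-1$ and $m\beta-2m+1$.
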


\begin{proof}
Since $y = 2$ we have $\alpha = y^2 - x^2 y + 2x^2-2=2$ and so $S_k(\alpha) = k+1$ for all $k \in \BZ$. This implies that $\beta =2 + (x^2-y-2)\big( S_m(\alpha)+(1-y)S_{m-1}(\alpha) \big)^2 = x^2 - 2$ and
	\begin{align*}
		R_{n,m}(x,y) & =[ (x^2-3)S_m(\alpha)-S_{m-1}(\alpha) ] S_n(\beta) - [(x^2-3)S_{m-1}(\alpha)-S_{m-2}(\alpha)] S_{n-1}(\beta)\\
		&= [(\beta-1)(m+1) - m]S_n(\beta) - [(\beta-1)m- (m-1)]S_{n-1}(\beta) \\
		&= m (\beta-2) (S_n(\beta) - S_{n-1}(\beta)) + (\beta-1) S_n(\beta) - S_{n-1}(\beta) \\
		&= m (\beta-2) (S_n(\beta) - S_{n-1}(\beta)) + S_{n+1}(\beta) -  S_n(\beta).
	\end{align*}
	
	 If we also have $x=2\cos\theta$ for some $\theta \in \BR \setminus \BZ\frac{\pi}{2}$ then $\beta= x^2-2=2\cos2\theta$. Since $2\theta \in \BR \setminus \BZ \pi$, by Lemma \ref{chev}(3) we have  $S_k(\beta) = \frac{\sin (2k+2) \theta}{\sin 2\theta}$ for all $k \in \BZ$. Hence
	\begin{align*}
		R_{n,m}(x,y) &= m ( 2\cos 2\theta-2) \frac{\sin (2n+2) \theta- \sin 2n\theta}{\sin 2\theta} + \frac{\sin (2n+4)\theta- \sin (2n+2)\theta}{\sin 2\theta} \\
		&= m ( 2\cos 2\theta-2) \frac{\cos (2n+1)\theta}{\cos \theta} + \frac{\cos (2n+3)\theta}{\cos\theta}.
	\end{align*}

\end{proof}

\section{ Left orderability of cyclic branched covers} \label{LO}

In this section we will study real roots of the Riley polynomial of the rational knot $C(2n+1,2m,2)$. We then use the properties of real roots to make a conclusion about the left orderability of the fundamental groups of the cyclic branched covers of  $C(2n+1,2m,2)$. 

\subsection{Real roots of the Riley polynomial}

There are four cases to consider: when both $n$ and $m$ are the same sign and when $n$ and $m$ are opposite in sign. We will carefully study real roots of the Riley polynomial for the cases $n \geq 1$, $m \geq 1$ and $n \geq 2$, $m \leq -1$. The other two cases are similar. Additionally, the cases $n = 1$, $m \leq -1$ and $n = -2$, $m \geq 1$ differ slightly from the others. So we will consider them separately.

\subsubsection{} \textbf{Case $\bf{1}$:} $n \geq 1$, $m \geq 1$.

\begin{lemma} \label{asign1}
	Suppose $y > 2$ and $x \in \BR$ satisfies $x^2 \le 4$. Then $$S_m (\alpha) + (1 - y) S_{m-1} (\alpha) > 0.$$
\end{lemma}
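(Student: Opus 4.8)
The plan is to show that the argument $\alpha$ is large enough and then to read off the sign from the closed form of the Chebyshev polynomials in Lemma~\ref{chev}(3). First I would record the factorization $\alpha-2=(y-2)(y+2-x^2)$ already used in the proof of Lemma~\ref{simplify}: since $y>2$ and $x^2\le 4<y+2$, both factors are positive, so $\alpha>2$. Hence I can write $\alpha=s+s^{-1}$ with a real $s>1$, and by Lemma~\ref{chev}(3), $S_k(\alpha)=(s^{k+1}-s^{-k-1})/(s-s^{-1})$ for every integer $k$. In particular $S_{m-1}(\alpha)=(s^m-s^{-m})/(s-s^{-1})>0$ for $m\ge 1$, so the claimed inequality $S_m(\alpha)+(1-y)S_{m-1}(\alpha)>0$ is equivalent to $S_m(\alpha)/S_{m-1}(\alpha)>y-1$.

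To bound this quotient I would use two elementary comparisons. The first is that $S_m(\alpha)/S_{m-1}(\alpha)=(s^{m+1}-s^{-m-1})/(s^m-s^{-m})>s$ for $m\ge 1$: cross-multiplying (the denominator is positive) reduces this to $s^{-m-1}(s^2-1)>0$, which is true since $s>1$. The second is that $s>y-1$. Since $z\mapsto z+z^{-1}$ is strictly increasing for $z>1$ and $y-1>1$, this amounts to $\alpha>(y-1)+(y-1)^{-1}$, and using $x^2\le 4$ together with $y-2>0$,
\[
\alpha-(y-1)-\tfrac{1}{y-1}=y^2-y-1-x^2(y-2)-\tfrac{1}{y-1}\ \ge\ y^2-5y+7-\tfrac{1}{y-1}\ =\ \frac{(y-2)^3}{y-1}\ >\ 0,
\]
the penultimate equality being the identity $(y-1)(y^2-5y+7)-1=(y-2)^3$. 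Combining the two comparisons gives $S_m(\alpha)/S_{m-1}(\alpha)>s>y-1$, which is exactly the reformulated claim.

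I expect the main obstacle to be the second comparison $s>y-1$: one has to guess the correct threshold and then notice the cubic factorization above, which makes positivity transparent even on the boundary $x^2=4$ (where the bound is an equality and is tight as $y\to 2^+$). Everything else is just the three-term recurrence and the explicit form of $S_k$. If one preferred to avoid the closed form in the first comparison, one could instead show by induction on $m$ that $S_m(\alpha)/S_{m-1}(\alpha)$ is decreasing in $m$ with limit $s$, but the direct computation is shorter.
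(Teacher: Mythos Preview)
Your proof is correct. Both you and the paper reduce the lemma to the inequality $S_m(\alpha)/S_{m-1}(\alpha)>y-1$, but the two arguments diverge in how that inequality is verified. The paper first replaces $\alpha$ by the smaller value $\alpha_0=2+(y-2)^2$, writes $\alpha_0=v^2+v^{-2}$ with $v-v^{-1}=y-2$, computes $S_m(\alpha_0)/S_{m-1}(\alpha_0)-(y-1)$ explicitly as a positive expression in $v$, and then appeals to the (unproved but standard) monotonicity of $S_m/S_{m-1}$ in its argument on $(2,\infty)$. You instead bound the ratio at $\alpha$ itself by the crude $S_m(\alpha)/S_{m-1}(\alpha)>s$, and push all the work into the algebraic comparison $s>y-1$, which you settle cleanly via the identity $(y-1)(y^2-5y+7)-1=(y-2)^3$. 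Your route is a bit more self-contained (no monotonicity claim is needed) and makes the tightness at $x^2=4$, $y\to 2^+$ visible; the paper's route keeps the estimate of the Chebyshev ratio exact rather than crude, at the cost of the extra monotonicity step.
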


\begin{proof}
	Let $u = y - 2$, then $\alpha = 2 + u^2 + (4 - x^2)u$. Since $u > 0$ and $4 \geq x^2$, we have $\alpha \ge 2 + u^2 := \alpha_0.$ Choose $v > 1$ such that $u = v - v^{-1}$, and so $\alpha_0 = v^2 + v^{-2}$. Then
	\begin{eqnarray*}
		\frac{S_m (\alpha_0)}{S_{m-1} (\alpha_0)} - u - 1 &=& \frac{v^{2m+2} - v^{-2m-2}}{v^{2m} - v^{-2m}} - (v-v^{-1})-1\\  &=& \frac{(v^2 - 1) (v+1+ v^{1 + 4m} (v-1))}{v^2(v^{4m} -1)} > 0.
	\end{eqnarray*}
	
	Since $S_m (\alpha) / S_{m-1} (\alpha)$ is increasing in $\alpha \in (2, \infty)$ and $\alpha \ge \alpha_0$, we have
	\[
		\frac{S_m (\alpha)}{S_{m-1} (\alpha)} \ge \frac{S_m (\alpha_0)}{S_{m-1} (\alpha_0)} > u + 1 = y-1.
	\]
	Therefore $S_m (\alpha) + (1 - y) S_{m-1} (\alpha) > 0$.
\end{proof}

\begin{proposition} \label{prop1}
	Suppose $x \in \BR$ satisfies $4 \geq x^2 > 2 + 2 \cos \frac{(2n-1)\pi}{2n + 1}$. Then $R_{n,m} (x,y) = 0$ has at least one real solution $y > 2$.
\end{proposition}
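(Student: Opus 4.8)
The plan is to apply the intermediate value theorem to the continuous function $y \mapsto R_{n,m}(x,y)$ on the interval $(2,\infty)$, for each fixed $x$ with $4 \ge x^2 > 2 + 2\cos\frac{(2n-1)\pi}{2n+1}$. I need a sign at (or near) $y = 2$ and the opposite sign as $y \to \infty$. Lemma \ref{limsign} already supplies the behavior at infinity: since $n \ge 1$ and $m \ge 1$, we have $(-1)^n R_{n,m}(x,y) \to -\infty$, i.e. $\operatorname{sign} R_{n,m}(x,y) \to (-1)^{n+1}$ for large $y$. So the remaining task is to produce a point $y_0 \ge 2$ at which $(-1)^n R_{n,m}(x,y_0) \ge 0$ (with strict inequality somewhere, or a sign-change argument), and then IVT gives a root in $(y_0,\infty)$.

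For the value at $y = 2$ I would invoke Lemma \ref{y=2}. Writing $\beta = x^2 - 2$, we have
\[
R_{n,m}(x,2) = m(\beta - 2)\big(S_n(\beta) - S_{n-1}(\beta)\big) + S_{n+1}(\beta) - S_n(\beta).
\]
The hypothesis $4 \ge x^2 > 2 + 2\cos\frac{(2n-1)\pi}{2n+1}$ translates to $2 \ge \beta > 2\cos\theta_1$ where $\theta_1 = \frac{\pi}{2n+1}$ is the largest of the angles $\theta_j = \frac{(2j-1)\pi}{2n+1}$ from Lemma \ref{root}. On this range $\beta - 2 \le 0$. The idea is that just below $y = 2$ the term $m(\beta-2)(S_n(\beta)-S_{n-1}(\beta))$ becomes relevant: I would instead evaluate at $y$ slightly larger than $2$ and use Lemma \ref{asign1}, or more cleanly use Lemma \ref{simplify} at a value of $y$ where $S_n(\beta) = S_{n-1}(\beta)$. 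Concretely: as $y$ increases from $2$, $\beta$ increases from $x^2 - 2$ (when $m = 1$; for general $m$ one must track $\beta$ via Proposition \ref{ab}), and $\beta$ will pass through a value where $S_n(\beta) - S_{n-1}(\beta) = 0$ — this happens at $\beta = 2\cos\theta_j$ by Lemma \ref{root}, and the relevant root $\beta = 2\cos\theta_1$ lies just above the starting value $x^2 - 2$ because $x^2 - 2 \le 2\cos\theta_1$ is exactly the hypothesis (for $m=1$). At that $y$, Lemma \ref{simplify} gives $R_{n,m}(x,y) = (x^2 - y - 2)[S_m(\alpha) + (1-y)S_{m-1}(\alpha)] S_n(\beta)$; here $x^2 - y - 2 < 0$, the bracket is $> 0$ by Lemma \ref{asign1} (using $x^2 \le 4$), and $S_n(2\cos\theta_1) = S_n(\beta) > 0$ by Lemma \ref{root} (the $j=1$ case, $(-1)^{j+1} = 1$). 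Hence $R_{n,m}(x,y) < 0$ there, which is the sign $(-1)^{n+1}$ precisely when $n$ is even; for $n$ odd I need the reasoning to produce the opposite sign, so I would instead locate the value of $y$ where $\beta$ hits the next root $2\cos\theta_2$ (where $S_n < 0$), giving $R_{n,m} > 0$ there — and in general compare the sign of $R_{n,m}$ at two consecutive such values of $y$, whose signs alternate, against the limiting sign $(-1)^{n+1}$ at infinity.

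The cleanest packaging is probably: show that as $y$ ranges over $[2,\infty)$, the auxiliary quantity $\beta$ (which by Proposition \ref{ab} is a continuous function of $y$, starting at $\beta = x^2 - 2 \le 2\cos\theta_1 < 2$ and tending to $+\infty$) sweeps past all of $2\cos\theta_1, \ldots, 2\cos\theta_n$ and then past $\beta = 2$; at the $y$-value realizing $\beta = 2\cos\theta_j$, Lemma \ref{simplify} and Lemmas \ref{asign1}, \ref{root} give $\operatorname{sign} R_{n,m} = (-1)^{j+1}$; and since $n \ge 1$, there is at least one such $j$ (namely $j = n$) for which $(-1)^{j+1} = (-1)^{n+1}$, matching the sign at infinity — so between that $y$-value and $+\infty$, $R_{n,m}$ does not change sign at the endpoints and... wait, that is the wrong direction; I actually want a sign change, so I should pair $j = n$ with $j = n-1$ (or use that the sign at $\beta$ slightly less than $2\cos\theta_1$, i.e. at $y$ slightly more than $2$, can be computed directly from Lemma \ref{y=2} when $m = 1$ and has sign opposite to the limit). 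The main obstacle, and the step requiring the most care, is exactly this bookkeeping of signs: verifying that $\beta$ is monotonic (or at least surjective onto the needed interval) in $y$ on $[2,\infty)$ for general $m \ge 1$ via Proposition \ref{ab}, and matching up the alternating signs $(-1)^{j+1}$ at the crossing values with the fixed sign $(-1)^{n+1}$ at infinity to guarantee an honest sign change, hence a root $y > 2$ by IVT. Everything else (continuity, the explicit evaluations, the inequality $x^2 - y - 2 < 0$ for $y > 2, x^2 \le 4$) is routine given the lemmas already established.
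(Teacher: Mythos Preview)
Your overall strategy is the same as the paper's --- apply the intermediate value theorem by using Lemma~\ref{simplify} at a value $y_0>2$ where $\beta$ hits a root of $S_n-S_{n-1}$, then compare with the sign at infinity from Lemma~\ref{limsign} --- but two concrete errors send you into unnecessary sign bookkeeping. First, you have the direction of $\beta$ wrong: by Proposition~\ref{ab} the leading term of $\beta$ in $y$ is $-y^{4m+1}$, so $\beta\to-\infty$ (not $+\infty$) as $y\to\infty$. Second, you misidentify the relevant root: the hypothesis $x^2-2>2\cos\frac{(2n-1)\pi}{2n+1}$ involves $\theta_n=\frac{(2n-1)\pi}{2n+1}$ (the $j=n$ case in Lemma~\ref{root}, giving the \emph{smallest} of the values $2\cos\theta_j$), not $\theta_1=\frac{\pi}{2n+1}$; and the inequality goes the other way from what you wrote.

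Once these are corrected the argument becomes a single clean step with no alternation needed. At $y=2$ we have $\beta=x^2-2>2\cos\theta_n$ by hypothesis; since $\beta\to-\infty$, continuity gives $y_0>2$ with $\beta=2\cos\theta_n$. Lemma~\ref{root} (with $j=n$) yields $S_n(\beta)=S_{n-1}(\beta)$ and $(-1)^{n+1}S_n(\beta)>0$. Then Lemma~\ref{simplify}, together with $x^2-y_0-2<0$ and Lemma~\ref{asign1}, gives $R_{n,m}(x,y_0)$ the sign $(-1)\cdot(+1)\cdot(-1)^{n+1}=(-1)^n$, which is exactly opposite to the limiting sign $(-1)^{n+1}$ from Lemma~\ref{limsign}. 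So the intermediate value theorem applies directly on $(y_0,\infty)$, and there is no need to track multiple roots $\theta_j$ or worry about monotonicity of $\beta$.
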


\begin{proof}	
	By Lemma \ref{limsign} we have $(-1)^{n} R_{n,m}(x,y) \to  -\infty$ as $y \to \infty$. To prove the proposition, it suffices to show that there exists $y_0 \ge 2$ such that $(-1)^{n} R_{n,m}(x,y_0)>0$. 
	
	If we let $y = 2$, then by Lemma \ref{y=2} we have 
	$\beta = x^2 - 2 >  2 \cos \frac{(2n-1)\pi}{2n + 1}$. Since the leading term of $\beta $ is $-y^{4m+1}$, we have $\beta \to - \infty$ as $y \to \infty$. Hence there exists $y_0 \in (2, \infty)$ such that $\beta = 2 \cos \frac{(2n-1)\pi}{2n + 1}$. Lemma \ref{root} implies that  $S_n(\beta) = S_{n-1}(\beta)$ and $(-1)^{n+1}S_n(\beta) >0$. Then by Lemma \ref{simplify} we have
	$$
	R_{n,m}(x,y_0) = (x^2 - y_0 - 2) [S_m (\alpha) + (1 - y_0) S_{m-1} (\alpha)] S_n (\beta).
	$$
	
	Since $y_0>2$ and $x \in \BR$ satisfies $4 \geq x^2$, by Lemma \ref{asign1} we have $S_m (\alpha) + (1 - y_0) S_{m-1} (\alpha)>0$. Note that $x^2- y_0 -2 < 0$ and the sign of $S_n (\beta)$ is $(-1)^{n+1}$. So the sign of $R_{n,m}(x,y_0)$ is $(-1)^n$, which means that $(-1)^n R_{n,m}(x,y_0) > 0$. 
\end{proof}

If we let $x = 2\cos\frac{\pi}{r}$ for some integer $r \ge 2$, then the condition $x^2 > 2 + 2 \cos \frac{\pi (2n-1)}{2n + 1}$ is equivalent to $\cos\frac{2\pi}{r} > \cos \frac{\pi (2n-1)}{2n + 1}$ and so $r > \frac{2(2n+1)}{2n-1} = 2 + \frac{4}{2n-1}$. This gives the following lower bound for $r$: 
\begin{itemize}
\item $r \ge 3$ if $n \ge 3$.
\item $r \ge 4$ if $n = 2$.
\item $r \ge 7$ if $n =1$.
\end{itemize}
In the case $n = 1$, we can slightly improve the lower bound for $r$.

\begin{proposition} \label{prop2}
	$R_{1,m}(2\cos\frac{\pi}{r},y) = 0$ has at least one real solution $y > 2$ in the following cases:
	\begin{enumerate}
		\item $r = 5,6$ when $m=1,2$.
		\item $r = 6$ when $m \geq 3$.
	\end{enumerate}
\end{proposition}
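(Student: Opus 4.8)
The plan is to imitate the proof of Proposition \ref{prop1}, but with a more elementary choice of test value for $y$. Since $n=1\ge 1$ and $m\ge 1$, Lemma \ref{limsign} gives $(-1)^1R_{1,m}(x,y)\to-\infty$, i.e. $R_{1,m}(x,y)\to+\infty$, as $y\to\infty$, for any fixed $x\in\BR$. As $R_{1,m}(x,\cdot)$ is a polynomial, the intermediate value theorem shows it suffices to exhibit a value $y_0\ge 2$ with $R_{1,m}(x,y_0)<0$: the desired root then lies in $(y_0,\infty)\subseteq(2,\infty)$. Where Proposition \ref{prop1} chose $y_0$ so that $\beta$ would land on a root of $S_n-S_{n-1}$, here I will simply take $y_0=2$ and compute directly.

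By Lemma \ref{y=2} with $n=1$ we have $\beta=x^2-2$ and, since $S_1(\beta)-S_0(\beta)=\beta-1$ and $S_2(\beta)-S_1(\beta)=\beta^2-\beta-1$,
$$
R_{1,m}(x,2)=m(\beta-2)(\beta-1)+\beta^2-\beta-1.
$$
For $r=6$, $x=2\cos\frac{\pi}{6}=\sqrt3$ gives $\beta=1$, so $R_{1,m}(\sqrt3,2)=-1<0$ for every $m$; this disposes of $r=6$ for all $m\ge 1$, which is item (2) together with the $r=6$ part of item (1). For $r=5$, $x=2\cos\frac{\pi}{5}$ gives $\beta=2\cos\frac{2\pi}{5}=\frac{\sqrt5-1}{2}$, which satisfies $\beta^2+\beta-1=0$; substituting $\beta^2=1-\beta$ into the display yields
$$
R_{1,m}\!\left(2\cos\tfrac{\pi}{5},\,2\right)=m(3-4\beta)-2\beta=5m+1-(2m+1)\sqrt5.
$$
Because $5m+1>0$, this is negative exactly when $(5m+1)^2<5(2m+1)^2$, i.e. $5m^2-10m-4<0$, i.e. $m\le 2$ for positive $m$. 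Hence $R_{1,m}(2\cos\frac{\pi}{5},2)<0$ for $m=1,2$, finishing the $r=5$ part of item (1).

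I do not expect a real obstacle here: the argument is a direct specialization of the method already established for Proposition \ref{prop1}. The only things to watch are (i) recording the correct low-index Chebyshev differences when setting $n=1$ in Lemma \ref{y=2}; (ii) using the quadratic relation $\beta^2+\beta-1=0$ (equivalently, $2\cos\frac{2\pi}{5}=\frac{\sqrt5-1}{2}$) to obtain the clean closed form at $r=5$; and (iii) confirming that $R_{1,m}(x,2)$ is \emph{strictly} negative in each listed case, so that the intermediate value theorem returns a root strictly greater than $2$, as the statement requires.
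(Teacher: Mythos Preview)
Your proof is correct and follows essentially the same approach as the paper: both evaluate at $y_0=2$, use Lemma~\ref{limsign} for the limit at infinity, and apply Lemma~\ref{y=2} to show $R_{1,m}(2\cos\frac{\pi}{r},2)<0$ in the listed cases. The only cosmetic difference is that the paper uses the trigonometric form of Lemma~\ref{y=2} to write $R_{1,m}(2\cos\frac{\pi}{r},2)=(5-2\sqrt5)m+1-\sqrt5$ at $r=5$ (identical to your $5m+1-(2m+1)\sqrt5$) and then checks $m=1,2$ numerically, whereas you reach the same value via $\beta^2+\beta-1=0$ and characterize negativity by the inequality $5m^2-10m-4<0$.
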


\begin{proof}
	Since $m \ge 1$, by Lemma \ref{limsign} we have $R_{1,m}(2\cos\frac{\pi}{r},y) \to \infty$ as $y \to \infty$. To prove the proposition, it suffices to check that $R_{1,m}(2\cos\frac{\pi}{r},2) <0$. 
	
	By Lemma  \ref{y=2} we have $R_{1,m}(2\cos\frac{\pi}{r},2) =  m ( 2\cos \frac{2\pi}{r}-2) \frac{\cos \frac{3\pi}{r}}{\cos \frac{\pi}{r}} + \frac{\cos \frac{5\pi}{r}}{\cos\frac{\pi}{r}}$.
	
	If $r=6$, then  $R_{1,m}(2\cos\frac{\pi}{r},2) =-1<0$.
	
	If $r=5$, then by a direct calculation we have 
	$$R_{1,m}(2\cos\frac{\pi}{r}, 2) = (5-2\sqrt{5})m + 1- \sqrt{5}
= \begin{cases} 
6-3\sqrt{5} &\mbox{if } m=1, \\ 
11-5\sqrt{5} &\mbox{if } m=2,
\end{cases}$$ 
which is negative when $m=1,2$.
\end{proof}

\subsubsection{}  \textbf{Case $\bf{2}$:} $n \geq 2$, $m \leq -1$.

\begin{proposition} \label{prop3}
	Suppose $x \in \BR$ satisfies $4 \geq x^2 > 2 + 2 \cos \frac{(2n-3)\pi}{2n + 1}$. Then $R_{n,m} (x,y) = 0$ has at least one real solution $y > 2$.
\end{proposition}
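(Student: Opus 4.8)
The plan is to mirror the proof of Proposition \ref{prop1}, with the two changes forced by the weaker hypothesis (the bound involving $\frac{(2n-3)\pi}{2n+1}$ rather than $\frac{(2n-1)\pi}{2n+1}$) and by the sign $m \le -1$. First, since $n \ge 1$ and $m \le -1$, Lemma \ref{limsign} gives $(-1)^n R_{n,m}(x,y) \to +\infty$ as $y \to \infty$; hence it suffices to produce some $y_0 > 2$ with $(-1)^n R_{n,m}(x,y_0) < 0$ and then apply the intermediate value theorem on $(y_0,\infty)$.

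To find such a $y_0$ I would track the auxiliary quantity $\beta = \beta(y)$, exactly as in Proposition \ref{prop1}. At $y = 2$, Lemma \ref{y=2} gives $\beta = x^2 - 2 > 2\cos\frac{(2n-3)\pi}{2n+1}$ by hypothesis, while as $y \to \infty$ the leading term of $\beta$ is $-y^{|4m+1|}$ (recorded in the proof of Lemma \ref{limsign}), which tends to $-\infty$. By continuity there is $y_0 \in (2,\infty)$ with $\beta(y_0) = 2\cos\frac{(2n-3)\pi}{2n+1}$. Now $\frac{(2n-3)\pi}{2n+1} = \theta_{n-1}$ in the notation of Lemma \ref{root} applied with $k = n$; the hypothesis $n \ge 2$ is precisely what guarantees $1 \le n-1 \le n$, so Lemma \ref{root} applies and yields $S_n(\beta(y_0)) = S_{n-1}(\beta(y_0))$ together with $(-1)^{n} S_n(\beta(y_0)) > 0$.

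With $S_n(\beta) = S_{n-1}(\beta)$ in hand, Lemma \ref{simplify} reduces the Riley polynomial to
\[
R_{n,m}(x,y_0) = (x^2 - y_0 - 2)\,[\,S_m(\alpha) + (1-y_0) S_{m-1}(\alpha)\,]\, S_n(\beta),
\]
so it remains only to read off three signs. Since $y_0 > 2$ and $x^2 \le 4$ we have $x^2 - y_0 - 2 < 0$, and $S_n(\beta(y_0))$ has sign $(-1)^n$ from the previous step. The one new input is the sign of the bracket $S_m(\alpha) + (1-y_0)S_{m-1}(\alpha)$ for $m \le -1$, i.e. the analogue of Lemma \ref{asign1}. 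Writing $m = -l$ with $l \ge 1$ and using $S_{-l}(z) = -S_{l-2}(z)$, $S_{-l-1}(z) = -S_{l-1}(z)$ from Lemma \ref{chev}(2), this bracket equals $(y_0-1)S_{l-1}(\alpha) - S_{l-2}(\alpha)$; since $x^2 \le 4$ and $y_0 > 2$ force $\alpha = 2 + (y_0-2)^2 + (4-x^2)(y_0-2) > 2$, the Chebyshev bounds $S_{l-1}(\alpha) \ge S_{l-2}(\alpha) \ge 0$ with $S_{l-1}(\alpha) > 0$ (here $S_{-1}(\alpha) = 0$ in the case $l=1$) together with $y_0 - 1 > 1$ show the bracket is positive. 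Hence $R_{n,m}(x,y_0)$ has sign $(-1)^{n+1}$, i.e. $(-1)^n R_{n,m}(x,y_0) < 0$; combined with the limit from Lemma \ref{limsign} this produces a real root $y^* \in (y_0,\infty)$, which in particular satisfies $y^* > 2$.

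I do not expect a serious obstacle: the argument is a routine adaptation of the Case~1 proof. The only step that needs genuine verification — and which I regard as the real content — is the sign lemma for $m \le -1$ (the analogue of Lemma \ref{asign1}); here one must make sure the positivity of the relevant combination of Chebyshev polynomials holds uniformly for all $\alpha > 2$, which the closed form $S_k(\alpha) = (t^{k+1}-t^{-k-1})/(t-t^{-1})$ with $\alpha = t+t^{-1}$, $t > 1$, makes transparent. The other point to be careful with is the index shift from $j = n$ to $j = n-1$ in Lemma \ref{root}, which is exactly why the hypothesis requires $n \ge 2$ rather than $n \ge 1$.
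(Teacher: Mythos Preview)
Your proposal is correct and follows essentially the same argument as the paper: you locate $y_0>2$ where $\beta=2\cos\frac{(2n-3)\pi}{2n+1}$ via the intermediate value theorem, invoke Lemma~\ref{root} with $j=n-1$ to get $S_n(\beta)=S_{n-1}(\beta)$ and $(-1)^n S_n(\beta)>0$, then use Lemma~\ref{simplify} and the positivity of $(y_0-1)S_{-m-1}(\alpha)-S_{-m-2}(\alpha)$ (since $\alpha>2$) to conclude the sign of $R_{n,m}(x,y_0)$ is $(-1)^{n+1}$. The paper proceeds identically, including the same rewriting of the bracket and the observation that $n\ge 2$ is exactly what is needed for the index $j=n-1$ in Lemma~\ref{root} to be valid.
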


\begin{proof}
By Lemma \ref{limsign} we have $(-1)^{n} R_{n,m}(x,y) \to  \infty$ as $y \to \infty$. To prove the proposition, it suffices to show that there exists $y_0 \ge 2$ such that $(-1)^{n} R_{n,m}(x,y_0)<0$. 

If we let $y = 2$, then by Lemma \ref{y=2} we have 
	$\beta = x^2 - 2 >  2 \cos \frac{(2n-3)\pi}{2n + 1}$. Since the leading term of $\beta $ is $-y^{|4m+1|}$, we have $\beta \to - \infty$ as $y \to \infty$.  Hence there exists $y_0 > 2$ such that $\beta = 2 \cos \frac{(2n-3)\pi}{2n + 1}$. Lemma \ref{root} implies that  $S_n(\beta) = S_{n-1}(\beta)$ and $(-1)^{n}S_n(\beta) >0$. Then by Lemma \ref{simplify} we have
	\begin{align*}
		R_{n,m}(x,y_0) &=(x^2 - y_0 - 2) [S_m (\alpha) + (1 - y_0) S_{m-1} (\alpha)] S_n (\beta) \\
					&= (x^2 - y_0 - 2) [(y_0-1) S_{-m-1} (\alpha) - S_{-m-2}(\alpha)] S_n (\beta).
	\end{align*}
	
	 Since $y_0 > 2$ and $x^2 \le 4$, we have $\alpha = 2 + (y_0-2)(y_0+2-x^2) > 2$. This implies that 
	 $$
	 (y_0-1) S_{-m-1} (\alpha) - S_{-m-2}(\alpha) > S_{-m-1} (\alpha) - S_{-m-2}(\alpha) > 0.
	 $$
So the sign of $R_{n,m}(x,y_0)$ is $(-1)^{n+1}$, which means that  $(-1)^{n} R_{n,m}(x,y_0) < 0$. 
\end{proof}

If we let $x = 2\cos\frac{\pi}{r}$ for some integer $r \ge 2$, then the condition $x^2 > 2 + 2 \cos \frac{\pi (2n-3)}{2n + 1}$ becomes $\cos\frac{2\pi}{r} > \cos \frac{\pi (2n-3)}{2n + 1}$ and so $r > \frac{2(2n+1)}{2n-3} = 2 + \frac{8}{2n-3}$. 
This gives the following lower bound for $r$: 
\begin{itemize}
\item $r \ge 3$ if $n \ge 6$.
\item $r \ge 4$ if $n = 4, 5$.
\item $r \ge 5$ if $n =3$.
\item $r \ge 11$ if $n =2$.
\end{itemize}
 In the cases $n = 2,3,4,5$, we can slightly improve these lower bounds.

\begin{proposition} \label{prop4}
$R_{n,m}(2\cos\frac{\pi}{r},y) = 0$ has at least one real solution $y > 2$ in the following cases:
	\begin{enumerate}
		\item $r = 3$ when $n=4,5$.
		\item $r = 3, 4$ when $n=3$.
		\item $4 \leq r \leq 10$ when $n = 2$.
	\end{enumerate}
\end{proposition}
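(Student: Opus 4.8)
The plan is to reproduce, for $m\le -1$, the argument used in Proposition~\ref{prop2}. Since $n\ge 1$ and $m\le -1$, Lemma~\ref{limsign} gives $(-1)^nR_{n,m}(2\cos\frac{\pi}{r},y)\to\infty$ as $y\to\infty$, so by the intermediate value theorem it is enough to find $y_0\ge 2$ with $(-1)^nR_{n,m}(2\cos\frac{\pi}{r},y_0)<0$; taking $y_0=2$, the root produced lies strictly in $(2,\infty)$ because $R_{n,m}(2\cos\frac{\pi}{r},2)\ne 0$. Hence the proposition reduces to showing, for each listed pair $(n,r)$ and every integer $m\le -1$, that $(-1)^nR_{n,m}(2\cos\frac{\pi}{r},2)<0$.

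First I would evaluate $R_{n,m}(2\cos\frac{\pi}{r},2)$ by Lemma~\ref{y=2} (applicable since $r\ge 3$ forces $\frac{\pi}{r}\notin\BZ\frac{\pi}{2}$), obtaining the affine function of $m$
$$R_{n,m}(2\cos\tfrac{\pi}{r},2)=m\Bigl(2\cos\tfrac{2\pi}{r}-2\Bigr)\frac{\cos\frac{(2n+1)\pi}{r}}{\cos\frac{\pi}{r}}+\frac{\cos\frac{(2n+3)\pi}{r}}{\cos\frac{\pi}{r}}.$$
Since $2\cos\frac{2\pi}{r}-2<0$ and $\cos\frac{\pi}{r}>0$ for $r\ge 3$, the coefficient of $m$ has the sign of $-\cos\frac{(2n+1)\pi}{r}$, and when it is nonnegative the worst case over $m\le -1$ occurs at $m=-1$. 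So the problem becomes a finite trigonometric check, one pair $(n,r)$ at a time.

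For $r=3$ all the cosines above lie in $\{\pm\tfrac{1}{2},\pm 1\}$, and substituting $n=3,4,5$ gives $R_{n,m}(1,2)$ equal to $-3m-2$, $6m+1$, $-3m+1$ respectively, so that $(-1)^nR_{n,m}(1,2)\le -1<0$ for all $m\le -1$; for $r=4$, $n=3$ one finds $R_{3,m}(2\cos\frac{\pi}{4},2)=-2m+1$ and hence $(-1)^3R_{3,m}(2\cos\frac{\pi}{4},2)=2m-1\le -3<0$. For $n=2$ and $4\le r\le 10$ the coefficient of $m$ is nonnegative because $\cos\frac{5\pi}{r}\le 0$ there; it vanishes exactly at $r=10$, where $R_{2,m}(2\cos\frac{\pi}{10},2)=\cos\frac{7\pi}{10}/\cos\frac{\pi}{10}<0$ for every $m$, and for $4\le r\le 9$ the extreme value on $\{m\le -1\}$ is attained at $m=-1$, so it remains to verify
$$\cos\tfrac{7\pi}{r}<\Bigl(2\cos\tfrac{2\pi}{r}-2\Bigr)\cos\tfrac{5\pi}{r},$$
whose right-hand side is positive on $4\le r\le 9$, while its left-hand side is $\le 0$ for $5\le r\le 9$ and equals $\frac{\sqrt2}{2}<\sqrt2$ at $r=4$. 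I expect the case $n=2$, $4\le r\le 10$ to be the main obstacle: it is the only one requiring several separate evaluations, and the one place where some care is needed to see that the inequality holds for all $m\le -1$ rather than merely at $m=-1$ — which is exactly what the sign of the coefficient of $m$ secures.
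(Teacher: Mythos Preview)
Your proposal is correct and follows essentially the same approach as the paper: both use Lemma~\ref{limsign} to reduce to checking $(-1)^nR_{n,m}(2\cos\frac{\pi}{r},2)<0$, evaluate this via Lemma~\ref{y=2}, and then carry out the finite trigonometric verification case by case. Your treatment of $n=2$, $4\le r\le 10$ is slightly more explicit than the paper's (you articulate why the maximum over $m\le -1$ is attained at $m=-1$ via the sign of the coefficient of $m$, whereas the paper simply writes the inequality chain), but the content is the same.
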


\begin{proof}
Since $m \le -1$, by Lemma \ref{limsign} we have $(-1)^{n} R_{n,m}(2\cos\frac{\pi}{r},y) \to  \infty$ as $y \to \infty$. To prove the proposition, it suffices to check that $(-1)^{n} R_{n,m}(2\cos\frac{\pi}{r}, 2)<0$. 

By Lemma  \ref{y=2} we have 
$$
R_{n,m}(2\cos\frac{\pi}{r}, 2) = m ( 2\cos \frac{2\pi}{r}-2) \frac{\cos \frac{(2n+1)\pi}{r}}{\cos \frac{\pi}{r}} + \frac{\cos \frac{(2n+3)\pi}{r}}{\cos\frac{\pi}{r}}.
$$

If $r=3$ then 
$R_{n,m}(2\cos\frac{\pi}{r}, 2) = -6m \cos \frac{(2n+1)\pi}{3} + 2\cos \frac{(2n+3)\pi}{3}
= \begin{cases} 
 -2 -3m &\mbox{if } n =3, \\ 
 1 + 6m &\mbox{if } n =4, \\
  1-3m&\mbox{if } n =5,
\end{cases}
$
which has the sign $(-1)^{n+1}$ when $n=3,4,5$.

If $r =4$  then $R_{n,m}(2\cos\frac{\pi}{r}, 2) = -2\sqrt{2} m \cos \frac{(2n+1)\pi}{4} + \sqrt{2} \cos \frac{(2n+3)\pi}{4}
= \begin{cases} 
1 +2m &\mbox{if } n =2, \\ 
 1-2m &\mbox{if } n =3,
\end{cases}$ which has the sign $(-1)^{n+1}$ when $n=2,3$.

If $n=2$ and $5 \le r \le 10$ then by direct calculations we have
\begin{eqnarray*}
R_{n,m}(2\cos\frac{\pi}{r}, 2) &=& m ( 2\cos \frac{2\pi}{r}-2) \frac{\cos \frac{5\pi}{r}}{\cos \frac{\pi}{r}} + \frac{\cos \frac{7\pi}{r}}{\cos\frac{\pi}{r}} \\
&\le& -( 2\cos \frac{2\pi}{r}-2) \frac{\cos \frac{5\pi}{r}}{\cos \frac{\pi}{r}} + \frac{\cos \frac{7\pi}{r}}{\cos\frac{\pi}{r}} \\
&<& 0.
\end{eqnarray*}
So $R_{n,m}(2\cos\frac{\pi}{r}, 2)$ has the sign $(-1)^{n+1}$. 
\end{proof}

The cases $n \leq -3$, $m \geq 1$ and $n \leq -2$, $m \leq -1$ can be proved in a similar way as Cases $1$ and $2$ above. However, this method of proof does not include the cases $n = 1$, $m \leq -1$ and $n = -2$, $m \geq 1$. We will consider them next.

\subsubsection{} \textbf{Case $\bf{3}$:} $n=1$, $m \leq -1$.

\begin{proposition} \label{n1sol}
	$R_{1,m}(2\cos\frac{\pi}{r},y) = 0$ has at least one real solution $y > 2$ in the following cases:
	\begin{enumerate}
		\item $r \geq 7$ when $m \leq -4$.
		\item $r \geq 8$ when $m = -2,-3$.
		\item $r \geq 9$ when $m = -1$.
	\end{enumerate}
\end{proposition}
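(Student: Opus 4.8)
The plan is to follow the same strategy as in Cases 1 and 2, but now with $n=1$ the polynomial $S_n(\beta) - S_{n-1}(\beta) = \beta - 1$ is linear, so the ``auxiliary point'' $y_0$ where $S_1(\beta) = S_0(\beta)$ is forced by $\beta = 1$, and Lemma \ref{simplify} together with Lemma \ref{limsign} will no longer suffice directly; instead I would evaluate $R_{1,m}$ at the two accessible points $y=2$ and at the $y_0>2$ with $\beta(y_0)=1$, and combine the signs. First, since $n=1$ and $m\le -1$, Lemma \ref{limsign} gives $(-1)^1 R_{1,m}(x,y) = -R_{1,m}(x,y) \to \infty$, i.e. $R_{1,m}(2\cos\tfrac{\pi}{r},y)\to -\infty$ as $y\to\infty$. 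So it suffices to produce a point $y_*\ge 2$ with $R_{1,m}(2\cos\tfrac{\pi}{r},y_*)>0$.

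The main step is to use the root $y_0$ of $\beta(y)=1$. Writing $x=2\cos\frac{\pi}{r}$ and $u=y-2$, one has $\alpha = 2 + u^2 + (4-x^2)u$ and $\beta = 2 + (x^2-y-2)\big(S_m(\alpha)+(1-y)S_{m-1}(\alpha)\big)^2$; since the leading term of $\beta$ in $y$ is $-y^{|4m+1|}$ and $\beta(2) = x^2-2 \ge 2\cos\frac{2\pi}{7} > 1$ once $r\ge 7$, there is a smallest $y_0>2$ with $\beta(y_0)=1$. At $\beta = 1 = 2\cos\frac{\pi}{3}$ we have $S_1(\beta) = 1 = S_0(\beta)$ (this is the $k=1$, $j=1$ case of Lemma \ref{root}, with $(-1)^{1+1}S_1(\beta) = 1 > 0$), so Lemma \ref{simplify} applies and gives
\[
R_{1,m}(x,y_0) = (x^2 - y_0 - 2)\,[S_m(\alpha) + (1-y_0)S_{m-1}(\alpha)]\cdot 1.
\]
As in the proof of Proposition \ref{prop3}, using $y_0>2$ and $x^2\le 4$ one gets $\alpha>2$, and $m\le -1$ lets us rewrite $S_m(\alpha)+(1-y_0)S_{m-1}(\alpha) = (y_0-1)S_{-m-1}(\alpha) - S_{-m-2}(\alpha) > S_{-m-1}(\alpha) - S_{-m-2}(\alpha) > 0$ by Lemma \ref{root} (the Chebyshev polynomials are positive and increasing past $z=2$). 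Since $x^2 - y_0 - 2 < 0$, this yields $R_{1,m}(x,y_0) < 0$, which is the \emph{wrong} sign for the argument above.

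So the real content — and the expected main obstacle — is that the auxiliary point $y_0$ alone gives $R_{1,m}(x,y_0)<0$ while $R_{1,m}(x,\infty) = -\infty$, so a sign change is not yet visible; one must instead locate a point where $R_{1,m}>0$. The natural candidate is $y=2$ itself: by Lemma \ref{y=2},
\[
R_{1,m}(2\cos\tfrac{\pi}{r},2) = m\big(2\cos\tfrac{2\pi}{r}-2\big)\frac{\cos\frac{3\pi}{r}}{\cos\frac{\pi}{r}} + \frac{\cos\frac{5\pi}{r}}{\cos\frac{\pi}{r}},
\]
and since $2\cos\frac{2\pi}{r}-2<0$ and $m\le -1$, the first term is a positive multiple of $-m\ge 1$; for $r\ge 7$ one has $\cos\frac{3\pi}{r}>0$ and $\cos\frac{\pi}{r}>0$, so the first term is $\ge (2-2\cos\frac{2\pi}{r})\frac{\cos\frac{3\pi}{r}}{\cos\frac{\pi}{r}}>0$, and the whole expression is positive provided this dominates $\big|\frac{\cos\frac{5\pi}{r}}{\cos\frac{\pi}{r}}\big|$. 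This is exactly where the case split on $m$ enters: for $m\le -4$ the coefficient $-m\ge 4$ makes domination automatic for all $r\ge 7$; for $m=-2,-3$ one needs $r\ge 8$ so that $2-2\cos\frac{2\pi}{r}$ is large enough relative to $|\cos\frac{5\pi}{r}|$; and for $m=-1$ one needs $r\ge 9$. In each of the three regimes I would verify the inequality $R_{1,m}(2\cos\frac{\pi}{r},2)>0$ by a direct monotonicity estimate: check it at the smallest allowed $r$, then observe that as $r$ increases $2\cos\frac{2\pi}{r}-2\to 0$ but $\frac{\cos\frac{3\pi}{r}}{\cos\frac{\pi}{r}}\to 1$ and $\frac{\cos\frac{5\pi}{r}}{\cos\frac{\pi}{r}}\to 1$, so one compares with the limiting value $1$ and bounds the error terms — the slightly delicate point being that $\cos\frac{5\pi}{r}$ changes sign near $r=5$, which is why the thresholds are pushed up to $7,8,9$. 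Once $R_{1,m}(2\cos\frac{\pi}{r},2)>0$ is established, combining with $R_{1,m}(2\cos\frac{\pi}{r},y)\to -\infty$ and the intermediate value theorem produces a real root $y>2$, completing the proof.
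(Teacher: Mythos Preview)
Your overall strategy matches the paper's: use Lemma~\ref{limsign} to get $R_{1,m}(2\cos\frac{\pi}{r},y)\to -\infty$ as $y\to\infty$, then show $R_{1,m}(2\cos\frac{\pi}{r},2)>0$ and apply the intermediate value theorem. The detour through the point $y_0$ with $\beta(y_0)=1$ is unnecessary --- as you yourself observe, it gives the wrong sign and contributes nothing; the paper never considers it.

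Where you and the paper differ is in how the inequality $R_{1,m}(2\cos\frac{\pi}{r},2)>0$ is verified. The paper does not work with the trigonometric form $m(2\cos\frac{2\pi}{r}-2)\frac{\cos(3\pi/r)}{\cos(\pi/r)}+\frac{\cos(5\pi/r)}{\cos(\pi/r)}$ directly; instead it uses the equivalent polynomial form from Lemma~\ref{y=2},
\[
R_{1,m}(2\cos\tfrac{\pi}{r},2)=m(\beta-2)(\beta-1)+\beta^2-\beta-1,\qquad \beta=2\cos\tfrac{2\pi}{r},
\]
which for each fixed $m\le -2$ is a quadratic in $\beta$ with explicit roots $\frac{3m+1\pm\sqrt{m^2+2m+5}}{2(m+1)}$. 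One root exceeds $2$, so positivity reduces to a single inequality $\beta>\frac{3m+1+\sqrt{m^2+2m+5}}{2(m+1)}$, which is then checked: for $m=-2,-3$ this forces $r\ge 8$, and for $m\le -4$ the right-hand side is at most $\frac{3}{2}-\frac{\sqrt{13}-2}{6}$, which is below $2\cos\frac{2\pi}{7}$. The case $m=-1$ collapses to $2\beta-3>0$, i.e.\ $r\ge 9$. This is cleaner than your proposed ``monotonicity in $r$ plus limiting value'' argument, which as stated is incomplete: the expression is not obviously monotone in $r$ (the factor $2-2\cos\frac{2\pi}{r}$ decreases while $\cos\frac{5\pi}{r}$ changes sign around $r=10$), so checking the smallest $r$ and the limit $r\to\infty$ does not by itself cover the intermediate values. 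Recasting the problem as a quadratic in $\beta$ avoids this difficulty entirely.
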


\begin{proof}

Since $m \le -1$, by Lemma \ref{limsign} we have $R_{1,m}(2\cos\frac{\pi}{r},y) \to  -\infty$ as $y \to \infty$. To prove the proposition, it suffices to check that $R_{1,m}(2\cos\frac{\pi}{r}, 2)>0$. 

By Lemma  \ref{y=2} we have 
\begin{align*}
	R_{1,m}(2\cos\frac{\pi}{r}, 2)  &= m(\beta - 2)(S_1(\beta) - S_{0}(\beta)) + S_{2}(\beta) - S_1(\beta) \\
	&= m(\beta - 2)(\beta - 1) + \beta^2 - \beta - 1,
\end{align*}
where $\beta = x^2 - 2 = 2 \cos \frac{2\pi}{r}$. 

For $m \le -2$, the inequality $R_{1,m}(2\cos\frac{\pi}{r}, 2)>0$ is equivalent to 
$$
\frac{3m + 1 -\sqrt{m^2 + 2m + 5}}{2(m+1)} > \beta = 2 \cos \frac{2\pi}{r} > \frac{3m + 1 + \sqrt{m^2 + 2m + 5}}{2(m+1)}. 
$$
Note that $\frac{3m + 1 -\sqrt{m^2 + 2m + 5}}{2(m+1)} > \frac{3m + 3 -\sqrt{m^2 + 2m + 1}}{2(m+1)} =2$.  Hence we only need
$$2 \cos \frac{2\pi}{r} > \frac{3m + 1 + \sqrt{m^2 + 2m + 5}}{2(m+1)} =  \frac{3}{2} - \left( \frac{1}{m+1} + \sqrt{\frac{1}{4} + \frac{1}{(m+1)^2}} \, \right).$$ 

For $m=-2$, we have $2 \cos \frac{2\pi}{r} > \frac{5-\sqrt{5}}{2}$ if $r \ge 8$. 

For $m=-3$, we have $2 \cos \frac{2\pi}{r} >1-  \frac{\sqrt{2}}{4}$ if $r \ge 8$. 

For $m \le -4$, we have 
$$
\frac{1}{m+1} + \sqrt{\frac{1}{4} + \frac{1}{(m+1)^2}} = \frac{1/4}{\frac{1}{-(m+1)}+\sqrt{\frac{1}{4} + \frac{1}{(m+1)^2}} } \ge \frac{1/4}{ \frac{2+\sqrt{13}}{6}} = \frac{\sqrt{13}-2}{6}.
$$
So we only need $2 \cos \frac{2\pi}{r} > \frac{3}{2} - \frac{\sqrt{13}-2}{6}$. This holds true if $r \ge 7$.

Finally, when $m=-1$ we have $R_{1,m}(2 \cos \frac{\pi}{r}, 2) = 4 \cos \frac{2\pi}{r} - 3 < 0$ if $r \geq 9$.
\end{proof}

\subsubsection{}  \textbf{Case $\bf{4}$:} $n = -2$, $m \geq 1$.

\begin{lemma} \label{=1}
Suppose $m\ge 1$ and $x \in \BR$ such that $4 \ge x^2 > 2 + \frac{2m}{m+1}$. Then there exists a unique $y_0>2$ such that $S_m(\alpha) + (1-y_0)S_{m-1}(\alpha) =1$. 
\end{lemma}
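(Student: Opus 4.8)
The plan is to turn $S_m(\alpha)+(1-y)S_{m-1}(\alpha)=1$ into a single equation in one hyperbolic variable whose two sides are monotone in opposite directions, so that one threshold governs both existence and uniqueness. Since $\alpha-2=(y-2)(y-x^2+2)$ and $x^2\le 4$, for $y>2$ both factors are positive, hence $\alpha>2$ and in particular $S_{m-1}(\alpha)>0$; so the equation is equivalent to $y-1=\big(S_m(\alpha)-1\big)/S_{m-1}(\alpha)$. Writing $\alpha=t+t^{-1}$ with $t>1$ and using Lemma \ref{chev}(3), so that $S_k(\alpha)=(t^{k+1}-t^{-k-1})/(t-t^{-1})$, one factors $t^m-1$ out of the numerators of $S_m(\alpha)-1$ and of $S_{m-1}(\alpha)$ to obtain the clean identity
\[
\frac{S_m(\alpha)-1}{S_{m-1}(\alpha)}=\frac{t^{m+1}+t^{-1}}{t^m+1}.
\]
Thus, for $y>2$, the equation is equivalent to $y-2=A(t):=\dfrac{(t-1)(t^{m+1}-1)}{t(t^m+1)}$, a positive quantity.

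Next I eliminate $y$. The parabola $y\mapsto\alpha(y)$ has vertex at $y=x^2/2\le 2$, so it maps $(2,\infty)$ bijectively onto $(2,\infty)$; hence $t$ and $y$ determine one another, and, setting $c:=4-x^2\in[0,\tfrac2{m+1})$, the number $y-2$ is the unique positive root $z$ of $z(z+c)=\alpha-2=\frac{(t-1)^2}{t}$. Since $A(t)>0$, the equation $y-2=A(t)$ is therefore equivalent to $A(t)\big(A(t)+c\big)=\frac{(t-1)^2}{t}$, an equation in $t$ alone; dividing through by $\frac{(t-1)^2}{t}$ it reads $\frac{(t^{m+1}-1)^2}{t(t^m+1)^2}+\frac{c(t^{m+1}-1)}{(t-1)(t^m+1)}=1$. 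Substituting $t=e^{2\tau}$ with $\tau>0$ and using $t^k-1=2e^{k\tau}\sinh(k\tau)$, $t^k+1=2e^{k\tau}\cosh(k\tau)$, the identity $\cosh^2(m\tau)-\sinh^2((m+1)\tau)=1-\sinh\tau\,\sinh((2m+1)\tau)$, and $\sinh((m+1)\tau)\,\cosh(m\tau)=\tfrac12\big(\sinh((2m+1)\tau)+\sinh\tau\big)$, the whole relation collapses to
\[
\frac{\sinh\tau}{\sinh((2m+1)\tau)}=\frac{4\sinh^2\tau+c}{4-c},\qquad\tau>0,
\]
where $4-c=x^2>0$.

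Finally, the right-hand side is strictly increasing on $(0,\infty)$, from $\frac{c}{4-c}$ toward $\infty$, while the left-hand side is strictly decreasing there: its logarithmic derivative equals $\coth\tau-(2m+1)\coth((2m+1)\tau)$, which is negative because $\xi\mapsto\xi\coth\xi$ is increasing on $(0,\infty)$. Hence the difference of the two sides is continuous and strictly decreasing on $(0,\infty)$, tending to $\frac1{2m+1}-\frac{c}{4-c}$ as $\tau\to0^+$ and to $-\infty$ as $\tau\to\infty$. The hypothesis $x^2>2+\frac{2m}{m+1}$ is exactly $c<\frac2{m+1}$, which is exactly $\frac1{2m+1}-\frac{c}{4-c}>0$; so the displayed equation has a unique root $\tau_0>0$. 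This yields a unique $t_0=e^{2\tau_0}>1$, hence a unique $\alpha_0=t_0+t_0^{-1}>2$, hence (by the bijection above) a unique $y_0>2$, and unwinding the reversible equivalences shows $y_0$ is the unique element of $(2,\infty)$ with $S_m(\alpha)+(1-y_0)S_{m-1}(\alpha)=1$.

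The step I would least expect to foresee is that composing the substitutions $\alpha=t+t^{-1}$ and $t=e^{2\tau}$ converts the rational equation $P(t)=1$, with $P(t)=\big(\frac{t^{m+1}-1}{\sqrt t\,(t^m+1)}\big)^2+c\,\frac{1+t+\cdots+t^m}{t^m+1}$ — which is not manifestly monotone, its second summand being decreasing for $m\ge2$ — into an equation whose two sides are monotone in opposite directions, so that existence and uniqueness drop out simultaneously from the single inequality $c<\frac2{m+1}$. Everything else — the factorizations, the hyperbolic identities, and the verification that $A(t)(A(t)+c)=\frac{(t-1)^2}{t}$ is equivalent to the original equation via uniqueness of the positive root — is routine.
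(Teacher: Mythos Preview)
Your proof is correct, but it takes a genuinely different route from the paper. The paper squares the equation via the Chebyshev identity $S_m^2+S_{m-1}^2-\alpha S_m S_{m-1}=1$ (using Lemma~\ref{asign1} to justify the squaring), simplifies to $(x^2-y)S_m(\alpha)-yS_{m-1}(\alpha)=0$, and rewrites this as $P(y):=(y^2-yx^2+x^2)S_{m-1}(\alpha)+y-x^2=0$; it then argues that $P$ is increasing on $(2,\infty)$ with $P(2)=(4-x^2)m+2-x^2<0$ precisely when $x^2>2+\frac{2m}{m+1}$. Your approach instead reparametrizes by $\tau$ through $\alpha=2\cosh 2\tau$ and reduces to an equation whose two sides move in opposite directions, so existence and uniqueness fall out from the single limit comparison $\frac{1}{2m+1}>\frac{c}{4-c}$, which is again exactly the hypothesis.

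What each buys: your argument is more self-contained on monotonicity---the paper's ``it is easy to see that $P(y)$ is increasing'' hides a short but genuine check, since $\alpha$ itself depends on $y$---and you avoid invoking Lemma~\ref{asign1}. On the other hand, the paper's reduction produces the explicit polynomial relation $P(y_0)=0$, and this is \emph{used} in the very next proposition (Proposition~\ref{n-2sol}) to solve for $S_{m-1}(\alpha)=\dfrac{x^2-y_0}{y_0^2-y_0x^2+x^2}$ at the special point $y_0$. Your hyperbolic formulation does not immediately hand you that identity, so if you were to adopt your proof in the paper you would still need to recover the polynomial form of the equation before proceeding to the application.
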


\begin{proof}
Consider $y>2$. Note that $S_m(\alpha) + (1-y)S_{m-1}(\alpha) >0$ (by Lemma \ref{asign1}) and  $S^2_m(\alpha) + S^2_{m-1}(\alpha) - \alpha S_m(\alpha) S_{m-1}(\alpha)=1$ (by Lemma \ref{che}). This implies that the equation  $S_m(\alpha) + (1-y_0)S_{m-1}(\alpha) =1$ is equivalent to $$(S_m(\alpha) + (1-y)S_{m-1}(\alpha))^2 = S^2_m(\alpha) + S^2_{m-1}(\alpha) - \alpha S_m(\alpha) S_{m-1}(\alpha).$$ 

With $\alpha=2+(y-2)(2+y-x^2)$, the above equation becomes $(y-2)S_{m-1}(y) [(x^2-y)S_m(\alpha)-yS_{m-1}(\alpha)]=0$, i.e. $(x^2-y)S_m(\alpha)-yS_{m-1}(\alpha)=0$. We have shown that for $y>2$, the equation $S_m(\alpha) + (1-y)S_{m-1}(\alpha) =1$ is equivalent to $(x^2-y)S_m(\alpha)-yS_{m-1}(\alpha) = 0$, which is also equivalent to $P(y):=(y^2-y x^2 + x^2)S_{m-1}(\alpha)+y-x^2=0$. 

It is easy to see that $P(y)$ is an increasing function on $(2,\infty)$ and $\lim_{y \to \infty} P(y)=\infty$. Moreover $P(2)=(4-x^2)m+2-x^2 = 2m - (x^2-2)(m+1) < 0$, since $x^2-2 > \frac{2m}{m+1}$. Hence there exists a unique $y_0>2$ such that $S_m(\alpha) + (1-y)S_{m-1}(\alpha) =1$.
\end{proof}

\begin{proposition} \label{n-2sol}
	$R_{-2,m}(2\cos\frac{\pi}{r},y) = 0$ has at least one real solution $y > 2$ if $r \geq 7$ and $m \ge 6$.
\end{proposition}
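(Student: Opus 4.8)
The plan is an intermediate‑value argument. By Lemma~\ref{limsign} (with $n=-2\le-1$ and $m\ge1$) one has $R_{-2,m}(2\cos\tfrac{\pi}{r},y)\to+\infty$ as $y\to\infty$, so it is enough to exhibit some $y_*\ge2$ with $R_{-2,m}(2\cos\tfrac{\pi}{r},y_*)\le0$ (taking $y_*>2$ if equality holds). Write $x=2\cos\tfrac{\pi}{r}$, so $x^2=2+2\cos\tfrac{2\pi}{r}<4$. I would split into \emph{Case A}: $x^2>2+\tfrac{2m}{m+1}$, and \emph{Case B}: $x^2\le2+\tfrac{2m}{m+1}$ — heuristically, "$r$ large" versus "$r$ small" relative to $m$.

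In Case A, Lemma~\ref{=1} gives $y_0>2$ with $S_m(\alpha)+(1-y_0)S_{m-1}(\alpha)=1$ (here $\alpha$ is evaluated at $y=y_0$), and I take $y_*=y_0$. Since $S_{-2}(\beta)=-1$ and $S_{-3}(\beta)=-\beta$ by Lemma~\ref{chev}(2), Proposition~\ref{R} reads $R_{-2,m}(x,y)=\beta\big[(x^2-y-1)S_{m-1}(\alpha)-S_{m-2}(\alpha)\big]-\big[(x^2-y-1)S_m(\alpha)-S_{m-1}(\alpha)\big]$. At $y=y_0$ one has $\beta=x^2-y_0$, $S_m(\alpha)=1+(y_0-1)S_{m-1}(\alpha)$, $S_{m-2}(\alpha)=\alpha S_{m-1}(\alpha)-S_m(\alpha)$, and (from the computation in the proof of Lemma~\ref{=1}) $\beta S_m(\alpha)=y_0S_{m-1}(\alpha)$; feeding these in and simplifying should collapse the whole expression to
\[
R_{-2,m}(x,y_0)=1+\beta-\beta^2 .
\]
Thus $R_{-2,m}(x,y_0)\le0$ precisely when $\beta\ge\varphi$, $\varphi=\tfrac{1+\sqrt5}{2}$ the golden ratio, and Case A is finished once this inequality is established.

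To prove $\beta\ge\varphi$ I would eliminate $y_0$: the identity $P(y_0)=0$ from the proof of Lemma~\ref{=1}, namely $\big(y_0^2-x^2(y_0-1)\big)S_{m-1}(\alpha)=x^2-y_0$, becomes, in terms of $\beta=x^2-y_0$,
\[
S_{m-1}(\alpha)=\frac{\beta}{\beta^2-x^2(\beta-1)},\qquad \alpha=\beta^2-x^2\beta+2x^2-2 ,
\]
and $\beta\in(1,2)$ because $S_{m-1}(\alpha)>1$. Assume for contradiction $\beta\in(1,\varphi)$. Since $x^2>2+\tfrac{2m}{m+1}\ge\tfrac{26}{7}>2\varphi$, on $(1,\varphi)$ the maps $\beta\mapsto\alpha$ and $\beta\mapsto\beta^2-x^2(\beta-1)$ both have derivative $2\beta-x^2<0$, the latter staying positive; hence $\alpha>\alpha|_{\beta=\varphi}$ and $\tfrac{\beta}{\beta^2-x^2(\beta-1)}<\tfrac{\varphi}{\varphi^2-x^2(\varphi-1)}$. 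Using $\varphi-1=\varphi^{-1}$, $\varphi^2=\varphi+1$, $(2-\varphi)^{-1}=\varphi^2$ and $2\varphi-1=\sqrt5$, one checks that $a:=\alpha|_{\beta=\varphi}=(\varphi-1)+x^2(2-\varphi)$ lies in $(2,\,7-3\varphi)$ (since $\tfrac{26}{7}<x^2<4$) and that $\tfrac{\varphi}{\varphi^2-x^2(\varphi-1)}=\tfrac{1}{\sqrt5-a}$. The displayed equation would then force $S_{m-1}$ of some argument $>a$ to be $<\tfrac{1}{\sqrt5-a}$; but $S_{m-1}\ge S_5$ on $[2,\infty)$ (as $m\ge6$), and a direct estimate of the explicit polynomial $S_5(z)=z^5-4z^3+3z$ on the short interval $a\in(2,7-3\varphi)$ shows $S_5(a)(\sqrt5-a)>1$ — the bound being tightest near $x^2=4$, where $S_5(7-3\varphi)\approx12.4$ exceeds $\tfrac{2}{5\sqrt5-11}\approx11.1$. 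This contradiction gives $\beta\ge\varphi$. This verification is the one genuinely delicate point of the proof; the rest of Case~A, and all of Case~B below, is routine algebra.

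In Case B I take $y_*=2$. Since $2n+1=-3$ and $2n+3=-1$ when $n=-2$, Lemma~\ref{y=2} gives $R_{-2,m}(2\cos\tfrac{\pi}{r},2)=m\big(2\cos\tfrac{2\pi}{r}-2\big)\dfrac{\cos\frac{3\pi}{r}}{\cos\frac{\pi}{r}}+1$. For $r\ge7$ one has $\cos\tfrac{3\pi}{r}>0$ and $\cos\tfrac{\pi}{r}>0$, so this is negative exactly when $m>f(r):=\dfrac{\cos\frac{\pi}{r}}{(2-2\cos\frac{2\pi}{r})\cos\frac{3\pi}{r}}$. The hypothesis of Case B rearranges to $m\ge\dfrac{\cos\frac{2\pi}{r}}{1-\cos\frac{2\pi}{r}}$, and the product‑to‑sum identity $2\cos\tfrac{2\pi}{r}\cos\tfrac{3\pi}{r}=\cos\tfrac{\pi}{r}+\cos\tfrac{5\pi}{r}$ shows $\dfrac{\cos\frac{2\pi}{r}}{1-\cos\frac{2\pi}{r}}>f(r)$ whenever $\cos\tfrac{5\pi}{r}>0$, i.e. for $r\ge11$. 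For $r\in\{7,8,9,10\}$ one instead checks directly that $f(r)<6\le m$ (for example $f(7)\approx5.4$). Either way $R_{-2,m}(2\cos\tfrac{\pi}{r},2)<0$, so $y_*=2$ works and the proof is complete.
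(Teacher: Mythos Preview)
Your proof is correct and follows essentially the same strategy as the paper's: an intermediate-value argument using Lemma~\ref{limsign}, with $y_*=2$ and Lemma~\ref{y=2} in one regime, and the special $y_0$ from Lemma~\ref{=1} (at which $R_{-2,m}(x,y_0)=1+\beta-\beta^2$) in the other, reducing the latter to the verification $\beta\ge\varphi$.

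The only real differences are organizational. Your case split is at $x^2=2+\tfrac{2m}{m+1}$ (the threshold of applicability of Lemma~\ref{=1}), whereas the paper splits at the slightly higher value $x^2-2=\tfrac{3+\sqrt{1-4/m}}{2}$ (where $R_{-2,m}(x,2)$ changes sign); consequently your Case~A is a bit larger than the paper's ``$y_0$ case,'' but your $\beta\ge\varphi$ argument still covers it. For $\beta\ge\varphi$ the paper verifies $P\big(x^2-\varphi\big)>0$ directly, treating $m=6$ and $m\ge7$ separately, while you argue by contradiction and reduce to the single inequality $S_5(a)(\sqrt5-a)>1$ on the short interval $a\in(2,7-3\varphi)$, which is the same computation rephrased. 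In your Case~B you handle $r\ge11$ uniformly via the product-to-sum identity $2\cos\tfrac{2\pi}{r}\cos\tfrac{3\pi}{r}=\cos\tfrac{\pi}{r}+\cos\tfrac{5\pi}{r}$, which is a nice touch not present in the paper (the paper does not need it because its $y=2$ case has a narrower range of~$r$). In all, the two proofs are minor variations of one another.
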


\begin{proof}
Suppose $r \geq 7$ and $m \ge 6$. By Lemma \ref{limsign} we have $R_{-2,m}(2\cos\frac{\pi}{r},y) \to  \infty$ as $y \to \infty$.  To prove the proposition,  it suffices to show that there exists $y_0 \ge 2$ such that $R_{-2,m}(2\cos\frac{\pi}{r}, y_0)<0$. 

We first consider the case $y_0=2$. By Lemma  \ref{y=2} we have 
\begin{eqnarray*}
R_{-2,m}(2\cos\frac{\pi}{r},2) &=& m (\beta-2) (S_{-2}(\beta) - S_{-3}(\beta)) + S_{-1}(\beta) -  S_{-2}(\beta) \\
&=& m (\beta-2)(-1+\beta)+1,
\end{eqnarray*}
where $\beta = x^2 - 2 = 2 \cos \frac{2\pi}{r}$. 

For $m \ge 6$, the inequality $R_{-2,m}(2\cos\frac{\pi}{r},2) <0$ is equivalent to 
$$
\frac{3 -\sqrt{1-4/m}}{2} < 2 \cos \frac{2\pi}{r} < \frac{3 + \sqrt{1-4/m}}{2}. 
$$
By a direct calculation we have $\frac{3 -\sqrt{1-4/m}}{2} \le \frac{3 -\sqrt{1/3}}{2} < 2 \cos \frac{2\pi}{7}  \le 2 \cos \frac{2\pi}{r}$. Hence $R_{-2,m}(2\cos\frac{\pi}{r},2) <0$ if $2 \cos \frac{2\pi}{r} < \frac{3 + \sqrt{1-4/m}}{2}$. 

It remains to consider the case $2 \cos \frac{2\pi}{r} \ge \frac{3 + \sqrt{1-4/m}}{2}$. Note that $\frac{3 + \sqrt{1-4/m}}{2} > \frac{2m}{m+1}$. In fact, this is equivalent to $\frac{m-3}{m+1} < \sqrt{1-\frac{4}{m}}$ which holds true since $( \frac{m-3}{m+1})^2 - (1-\frac{4}{m}) = \frac{-4(m^2-4m-1)}{m(m+1)^2}<0$. Hence $2 \cos \frac{2\pi}{r} > \frac{2m}{m+1}$.  By Lemma \ref{=1} there exists $y_0>2$ such that $S_m(\alpha) + (1-y)S_{m-1}(\alpha) =1$. 
We claim that $R_{-2,m}(2 \cos \frac{\pi}{r}, y_0)<0$.

\begin{figure}[h]
	\centering
	\includegraphics[scale=.75]{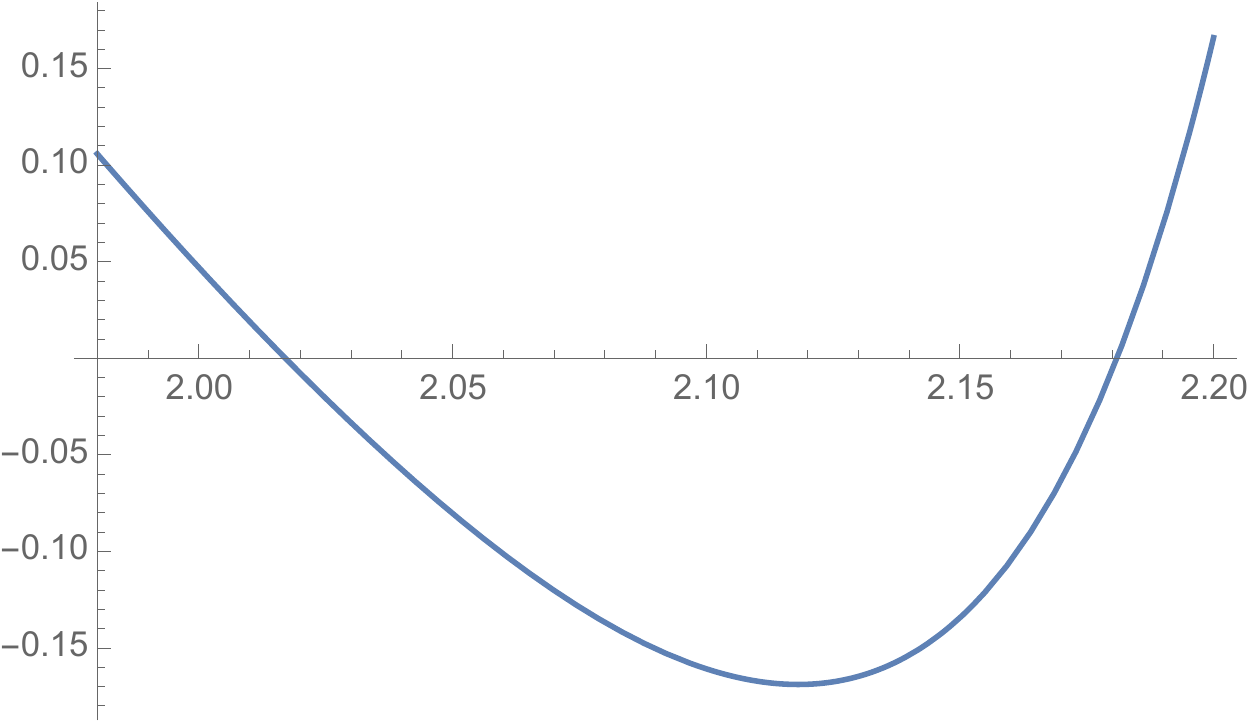}
	\caption{Plot of $R_{-2,m}(2 \cos \frac{\pi}{r}, y)$ for $m = 6$ and $r = 14$.}
	\label{fig:Example1}
\end{figure}

When $y=y_0$ we have $\beta = 2 + (x^2-y_0-2)\big( S_m(\alpha)+(1-y_0)S_{m-1}(\alpha) \big)^2  = x^2-y_0$. Since $S_{m-2}(\alpha) = \alpha S_{m-1}(\alpha) - S_m(\alpha)$ and $S_m(\alpha) = 1 +(y_0 -1)S_{m-1}(\alpha)$ we get
\begin{eqnarray*}
R_{-2,m}(2 \cos \frac{\pi}{r},y_0) &=& ( (x^2-y_0-1)S_m(\alpha)-S_{m-1}(\alpha)) S_{-2}(\beta) \\
&& \qquad \qquad - \,  ( (x^2-y_0-1)S_{m-1}(\alpha)-S_{m-2}(\alpha) ) S_{-3}(\beta) \\
&=& - ((\beta-1) S_m(\alpha) - S_{m-1}(\alpha))+ \beta  ((\beta-1) S_{m-1}(\alpha) - S_{m-2}(\alpha)) \\
&=& (\beta^2 - \beta +1 - \beta \alpha) S_{m-1}(\alpha)+ S_m(\alpha) \\
&=&  (\beta^2 - \beta +1 - \beta \alpha + y_0-1) S_{m-1}(\alpha)+ 1.
\end{eqnarray*}

With $\alpha = 2 + (y_0-2)(y_0+2-x^2) =2+(y_0-2)(2-\beta)$, by a direct calculation  we have $\beta^2 - \beta +1 - \beta \alpha + y_0-1 =-(\beta-1)(\beta + y_0 - \beta y_0)$. 

By Lemma \ref{=1} we have  $P(y_0)=(y_0^2-y_0 x^2 + x^2)S_{m-1}(\alpha)+y_0-x^2=0$, which implies that
$
S_{m-1}(\alpha)= \frac{x^2 - y_0}{y_0^2-y_0 x^2 + x^2} = \frac{\beta}{\beta+ y_0 - \beta y_0}.
$ 
Hence
$$
R_{-2,m}(2 \cos \frac{\pi}{r},y_0) = -(\beta-1)(\beta + y_0 - \beta y_0) \frac{\beta}{\beta+ y_0 - \beta y_0}+1 = 1+\beta-\beta^2. 
$$
Hence $R_{-2,m}(2 \cos \frac{\pi}{r},y_0)< 0$ if we can show that
$\beta > \frac{1+\sqrt{5}}{2}$, i.e. $y_0 < x^2 - \frac{1+\sqrt{5}}{2}$. 

Let $z := x^2 -2  = 2 \cos \frac{2\pi}{r}\ge \frac{3 + \sqrt{1-4/m}}{2}$. Then $x^2 - \frac{1+\sqrt{5}}{2} = z + \frac{3-\sqrt{5}}{2} \ge \frac{3 + \sqrt{1-4/m}}{2}+ \frac{3-\sqrt{5}}{2}>2$ for $m \ge 6$. Since $P$ is an increasing function on $(2,\infty)$ with a unique root $y_0>2$, we conclude that $y_0 < x^2 - \frac{1+\sqrt{5}}{2} = z + \frac{3-\sqrt{5}}{2}$ is equivalent to $P(z + \frac{3-\sqrt{5}}{2})>0$. 

At $y=y_1:=z + \frac{3-\sqrt{5}}{2}$ we have $y^2-yx^2+x^2=\frac{\sqrt{5}-1}{2}(\sqrt{5}-z)$ and $\alpha = \frac{5-\sqrt{5}+(3-\sqrt{5})z}{2}$. This implies that $P(y_1)=\frac{\sqrt{5}-1}{2}(\sqrt{5}-z) S_{m-1}(\alpha) - \frac{1+\sqrt{5}}{2}$. 

If $m \ge 7$ then $z \ge \frac{3 + \sqrt{1-4/m}}{2} \ge \frac{3 + \sqrt{3/7}}{2}>1.827$ and so $\alpha = \frac{5-\sqrt{5}+(3-\sqrt{5})z}{2} > 2.0798$. This implies that $S_{m-1}(\alpha) > S_6(2.0798) > 12$. Hence 
$$P(y_1) > \frac{\sqrt{5}-1}{2}(\sqrt{5}-2) (12) - \frac{1+\sqrt{5}}{2} = \frac{83-37\sqrt{5}}{2}>0.$$

\begin{figure}[h]
	\centering
	\includegraphics[scale=.75]{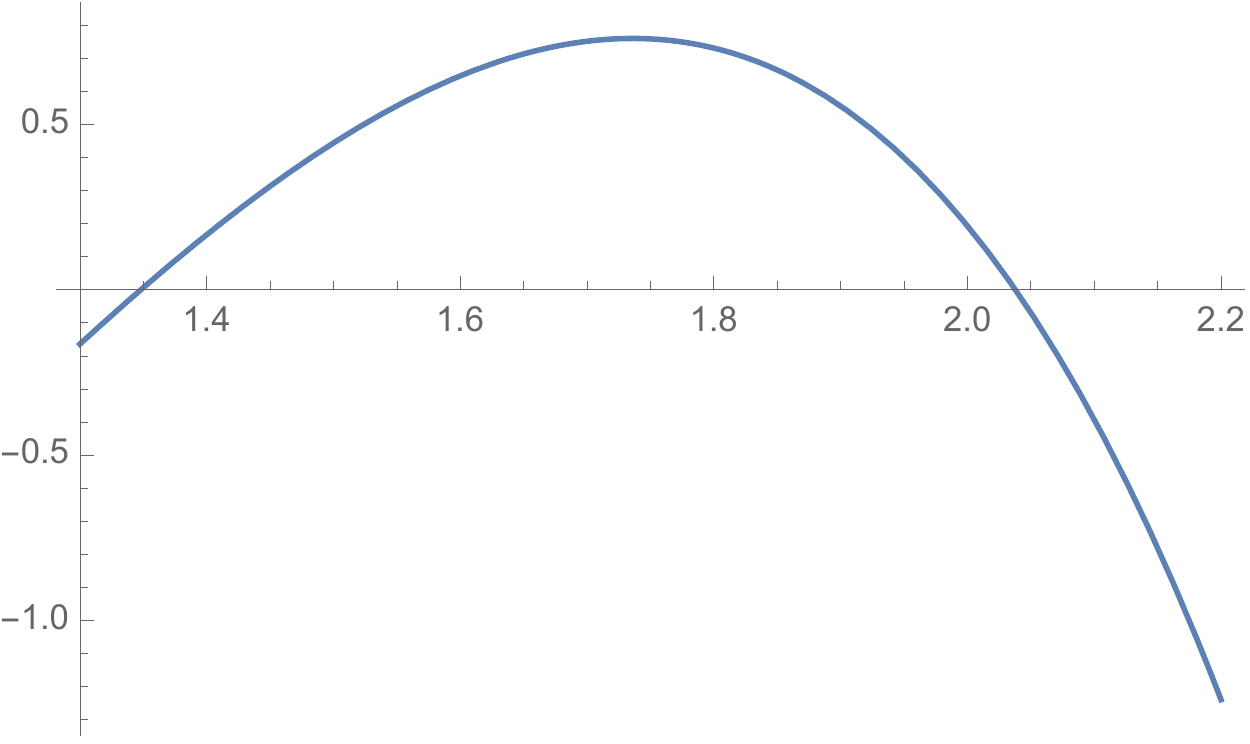}
	\caption{Plot of $P(y) = 0$ for $m = 6$.}
	\label{fig:Example2}
\end{figure}

If $m=6$ then $z \ge \frac{3 + \sqrt{1-4/m}}{2} \ge \frac{3 + \sqrt{2/3}}{2}>1.788$. Since $P(y_1)=\frac{\sqrt{5}-1}{2}(\sqrt{5}-z) S_{5}(\alpha) - \frac{1+\sqrt{5}}{2}$ where $\alpha = \frac{5-\sqrt{5}+(3-\sqrt{5})z}{2}$, $P(y_1)$ is a polynomial in $z$ of degree 6 with negative leading coefficient. This polynomial has exactly 6 real roots and the two largest ones are approximately $1.34811$ and $2.03784$ (see Figure \ref{fig:Example2}). Since $z$ lies in the interval between these two roots, we conclude that $P(y_1)>0$. 
\end{proof}

\begin{remark}
In the case $n=-2$ and $m=5$, the bounds $8 \le r \le 26$ can be checked directly by Mathematica. By numerical experiments, $R_{-2,m}(2\cos\frac{\pi}{r},y)$ does not have any real root $y>2$ if $m=5$ and $r \ge 27$, or $1 \le m \le 4$ and $r \ge 1$. 
\end{remark}

\subsubsection{Conclusion}  We combine the results from all cases to get the following:

\begin{proposition} \label{allsol}
	$R_{n,m}(2 \cos\frac{\pi}{r},y) = 0$ has at least one real solution $y > 2$ if:
	\begin{enumerate}
		\item $r \ge 3$ when $n \ge 3$ or $n \le -4$.
		\item $r \ge 4$ when $n =2$ or $n =-3$.
		\item $r \ge 5$ when $n=1$ and $m=1,2$, or $n=-2$ and $m=-1$.
		\item $r \ge 6$ when $n=1$ and $m \ge 3$, or $n=-2$ and $m \le -2$.
		\item $r \geq 7$ when $n=1$ and $m \leq -4$, or $n = -2$ and $m \geq 6$.
		\item $r \geq 8$ when $n=1$ and $m = -2,-3$.
		\item $r \geq 9$ when $n=1$ and $m = -1$.
	\end{enumerate}
\end{proposition}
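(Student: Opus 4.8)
The plan is to assemble the statement purely by combining the case analyses already carried out, so the proof is essentially bookkeeping. I would first recall which result covers which sign pattern: Propositions \ref{prop1} and \ref{prop2} handle $n \ge 1$, $m \ge 1$; Propositions \ref{prop3} and \ref{prop4} handle $n \ge 2$, $m \le -1$; Proposition \ref{n1sol} handles $n = 1$, $m \le -1$; and Proposition \ref{n-2sol} handles $n = -2$, $m \ge 1$. The two sign patterns not yet treated, namely $n \le -3$ with $m \ge 1$ and $n \le -2$ with $m \le -1$, I would dispose of by repeating the arguments of Cases 1 and 2 essentially verbatim: by Lemma \ref{chev}(2), passing to a negative index only replaces $S_n(\beta), S_{n-1}(\beta)$ by $-S_{-n-2}(\beta), -S_{-n-1}(\beta)$, so the analysis of $R_{n,m}$ on the line $y = 2$ and of its asymptotics as $y \to \infty$ (Lemma \ref{limsign}) goes through unchanged, producing $r \ge 3$ for $n \le -4$, $r \ge 4$ for $n = -3$ (both signs of $m$), and, for $n = -2$, $m \le -1$, the bounds $r \ge 5$ when $m = -1$ and $r \ge 6$ when $m \le -2$. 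I would present these as analogues of the stated propositions rather than rewrite the computations.

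Next I would carry out the elementary trigonometric translation that turns each hypothesis into a lower bound on $r$. With $x = 2\cos\frac{\pi}{r}$, an inequality $4 \ge x^2 > 2 + 2\cos\theta$ with $\theta \in (0,\pi)$ is equivalent to $\cos\frac{2\pi}{r} > \cos\theta$, i.e.\ $\frac{2\pi}{r} < \theta$, i.e.\ $r > \frac{2\pi}{\theta}$, and rounding up to the next integer gives the bound. For $\theta = \frac{(2n-1)\pi}{2n+1}$ (Case 1) this reads $r > 2 + \frac{4}{2n-1}$, so $r \ge 3$ for $n \ge 3$, $r \ge 4$ for $n = 2$, $r \ge 7$ for $n = 1$; for $\theta = \frac{(2n-3)\pi}{2n+1}$ (Case 2) it reads $r > 2 + \frac{8}{2n-3}$, so $r \ge 3$ for $n \ge 6$, $r \ge 4$ for $n = 4,5$, $r \ge 5$ for $n = 3$, $r \ge 11$ for $n = 2$. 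Cases 3 and 4 already come packaged with explicit $r$-bounds in Propositions \ref{n1sol} and \ref{n-2sol}, so nothing more is needed there.

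Finally I would take, for each fixed $(n,m)$, the union over all applicable propositions of the admissible values of $r$, and check that this union contains every $r \ge r_0$ with $r_0$ the entry in items (1)--(7). For instance, for $n \ge 3$: if $m \ge 1$ then Proposition \ref{prop1} gives $r \ge 3$ outright, while if $m \le -1$ then Proposition \ref{prop3} gives $r \ge 3$ for $n \ge 6$ and Proposition \ref{prop4} upgrades $n = 3,4,5$ to $r \ge 3$ as well; together with the symmetric statement for $n \le -4$ this is item (1). For $n = 1$, $m = 1,2$: Proposition \ref{prop2} gives $r = 5,6$ and Proposition \ref{prop1} gives $r \ge 7$, so the union is $r \ge 5$, which with the symmetric case $n = -2$, $m = -1$ gives item (3); similarly $n = 1$, $m \ge 3$ gives $r = 6$ from Proposition \ref{prop2} and $r \ge 7$ from Proposition \ref{prop1}, hence item (4), and items (2), (5), (6), (7) follow the same way by tracking which proposition is sharpest.

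The one genuinely delicate point is that for small $|n|$ no single proposition reaches the claimed bound: the general existence statements are valid only for $r$ above a comparatively large threshold, and the gap down to $r_0$ must be filled by the finitely many explicit small-$r$ checks in Propositions \ref{prop2}, \ref{prop4}, \ref{n1sol}, \ref{n-2sol} and their sign-symmetric versions. So the main obstacle is organizational: verifying that for every $(n,m)$ listed the ``explicit small $r$'' range and the ``asymptotic large $r$'' range overlap with no value of $r \ge r_0$ missed. Once this is confirmed the proposition is immediate.
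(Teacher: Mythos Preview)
Your proposal is correct and follows exactly the paper's approach: the paper presents Proposition \ref{allsol} under the heading ``Conclusion'' with the single sentence ``We combine the results from all cases to get the following,'' and gives no further proof. Your write-up simply spells out that combination in detail---which propositions cover which $(n,m)$, how the trigonometric inequality becomes an $r$-bound, and how the small-$r$ checks patch the gap left by the asymptotic propositions---together with the paper's own remark that the cases $n\le -3,\ m\ge 1$ and $n\le -2,\ m\le -1$ are handled by repeating the Case 1 and Case 2 arguments.
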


\subsection{Proof of Theorem \ref{thm2}}

Let $X^{(r)}_{K}$ denote the $r$-th cyclic branched cover of a knot $K$ in $S^3$. We will apply the following theorem.

\begin{theorem}[\cite{BGW, Hu}] \label{huthm}
	Given any prime knot $K$ in $S^3$, denote by $\mu$ a meridional element of the knot group $G(K)$. If there exists a nonabelian representation $\rho: G(K) \to \mathrm{SL}_2(\BR)$ such that $\rho(\mu^r) = \pm I$, then the fundamental group $\pi_1( X^{(r)}_{K} )$ is left orderable.
\end{theorem}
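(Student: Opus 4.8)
The plan is to realize $\pi_1(X^{(r)}_K)$ as a group acting nontrivially on the real line and then to invoke the orderability criterion for irreducible rational homology spheres. Throughout write $\Sigma_r = X^{(r)}_K$, and let $\mathcal{O}$ be the orbifold whose underlying space is $S^3$ and whose singular locus is $K$ with cone angle $2\pi/r$, so that $\pi_1^{orb}(\mathcal{O}) = G(K)/\langle\langle \mu^r\rangle\rangle$ and $\pi_1(\Sigma_r) = \ker\big(\pi_1^{orb}(\mathcal{O}) \to \BZ/r\big)$, the kernel of the map induced by the abelianization $G(K)\to\BZ$, $\mu\mapsto 1$.

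First I would produce a representation of $\pi_1(\Sigma_r)$ into $\mathrm{PSL}_2(\BR)$. Composing $\rho$ with the projection $\mathrm{SL}_2(\BR)\to \mathrm{PSL}_2(\BR)$ gives $\bar\rho\colon G(K)\to\mathrm{PSL}_2(\BR)$, and the hypothesis $\rho(\mu^r)=\pm I$ forces $\bar\rho(\mu^r)=I$. Hence $\bar\rho$ kills $\langle\langle\mu^r\rangle\rangle$ and descends to $\Phi\colon \pi_1^{orb}(\mathcal{O})\to\mathrm{PSL}_2(\BR)$; restricting to the index $r$ subgroup gives $\phi := \Phi|_{\pi_1(\Sigma_r)}$. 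This is exactly where the branching hypothesis $\rho(\mu^r)=\pm I$ is essential: it is what turns a representation of the \emph{knot group} into a representation of the \emph{closed-manifold group} $\pi_1(\Sigma_r)$. Using $\mathrm{PSL}_2(\BR)\subset \mathrm{Homeo}^+(S^1)$ via the action on the circle at infinity $S^1=\partial\mathbb{H}^2$, I thereby obtain an action of $\pi_1(\Sigma_r)$ on the circle.

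Next I would record the topological inputs. Since $K$ is a knot, $\Sigma_r$ is a rational homology $3$-sphere, so $H_1(\Sigma_r;\BQ)=0$. Since $K$ is \emph{prime}, $\Sigma_r$ is irreducible: otherwise the equivariant sphere theorem (Meeks--Yau) would produce a $\BZ/r$-invariant essential sphere, hence a connected sum decomposition of the pair $(S^3,K)$, contradicting primeness. Because every finite subgroup of $\mathrm{SL}_2(\BR)$ is conjugate into $\mathrm{SO}(2)$ and hence cyclic, the nonabelian hypothesis forces $\rho(G(K))$, and therefore $\bar\rho(G(K))=\Phi(\pi_1^{orb}(\mathcal{O}))$, to be infinite; as $\pi_1(\Sigma_r)$ has finite index in $\pi_1^{orb}(\mathcal{O})$, its image $\phi(\pi_1(\Sigma_r))$ has finite index in this infinite group and is thus infinite, so $\pi_1(\Sigma_r)$ is infinite. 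Being the fundamental group of an irreducible $3$-manifold with infinite $\pi_1$, it is aspherical; in particular $H^2(\pi_1(\Sigma_r);\BR)\cong H^2(\Sigma_r;\BR)=0$.

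The crux, and the step I expect to be the main obstacle, is to pass from the circle action $\phi$ to a genuine nontrivial action of $\pi_1(\Sigma_r)$ on $\BR$, and thence to left orderability. The obstruction to lifting $\phi$ to $\widetilde{\mathrm{PSL}_2(\BR)}\subset\mathrm{Homeo}^+(\BR)$ is the Euler class in $H^2(\pi_1(\Sigma_r);\BZ)$; its image in $H^2(\pi_1(\Sigma_r);\BR)=0$ vanishes, so the real Euler number is zero. Here one invokes the dynamical machinery behind \cite{BGW, Hu}: a circle action with vanishing real Euler number is, up to semi-conjugacy, the projection of an action on $\BR$ (Ghys's semi-conjugacy theorem), which yields a nontrivial homomorphism $\pi_1(\Sigma_r)\to\mathrm{Homeo}^+(\BR)$; and for an irreducible rational homology sphere a nontrivial line action implies $\pi_1(\Sigma_r)$ is left orderable (Boyer--Rolfsen--Wiest). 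The delicate points are the careful passage from the integral to the (bounded) real Euler class, the disposal of possibly elementary images, and the verification that $\phi$ is not semi-conjugate to a finite-orbit action, which is precisely where the nonabelianity of $\rho$ is used. Assembling these ingredients gives Theorem \ref{huthm}; the detailed dynamics are carried out in \cite{BGW} and \cite{Hu}.
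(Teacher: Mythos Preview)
The paper does not prove this theorem; it is quoted verbatim as a black-box result from \cite{BGW, Hu} and then immediately applied in the proof of Theorem~\ref{thm2}. There is therefore no argument in the paper to compare your proposal against.

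That said, your sketch is a faithful outline of the argument in \cite{BGW, Hu}: descend $\rho$ to the orbifold group $G(K)/\langle\langle\mu^r\rangle\rangle$, restrict to $\pi_1(\Sigma_r)$, use irreducibility of $\Sigma_r$ (from primeness of $K$ via the equivariant sphere theorem) together with the vanishing of rational second cohomology to lift the resulting $\mathrm{PSL}_2(\BR)$-circle action to an action on $\BR$, and conclude via Boyer--Rolfsen--Wiest. Two points to tighten. First, it is not true in general that the $r$-fold cyclic branched cover of a knot is a rational homology sphere: for instance the $6$-fold cover of the trefoil has positive first Betti number, since $e^{\pi i/3}$ is a root of $\Delta_{3_1}$. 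When $b_1(\Sigma_r)>0$ you should instead conclude left orderability directly from the surjection $\pi_1(\Sigma_r)\twoheadrightarrow\BZ$ together with irreducibility and BRW. Second, vanishing of the \emph{real} Euler class only tells you the integral Euler class is torsion, which does not by itself produce a lift to $\widetilde{\mathrm{PSL}_2(\BR)}$; the actual argument in \cite{BGW} uses that $H^2(\pi_1(\Sigma_r);\BZ)$ is finite, so that after pulling back along a suitable finite cover $S^1\to S^1$ the Euler class vanishes integrally and the lift exists. With these two adjustments your outline matches the cited proofs.
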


We are now ready to prove Theorem \ref{thm2}. We will prove the case when $n \ge 3$ or $n \le -4$. The other cases are proved similarly.

Suppose $r \in \BN$ satisfies $r \geq 3$ and let $x = 2 \cos \frac{\pi}{r}$. By Proposition \ref{allsol} there exists $y \in \BR$ with $y > 2$ such that $R_{n,m}(x,y) = 0$. Therefore, we have a nonabelian representation $\rho: G(K_{n,m}) \to \mathrm{SL}_2(\BC)$ of the form
\[
		\rho(a) = \begin{bmatrix} e^{i(\pi / r)} & 1 \\ 0 & e^{-i(\pi / r)} \end{bmatrix}, \qquad \rho(b) = \begin{bmatrix} e^{i(\pi / r)} & 0 \\ -u & e^{-i(\pi / r)} \end{bmatrix}.
\]
Note that $x = \text{tr} \rho(a) = 2 \cos \frac{\pi}{r}$.

Since $y = \text{tr} \rho (ab^{-1}) = 2 + u > 2$, we have $u > 0$. By \cite[page 786]{Kh}, the representation $\rho$ can be conjugated to an $\mathrm{SL}_2(\BR)$ representation, denoted by $\rho'$. Note that $\rho'(a^r) = -I$ since $\rho(a^r) = -I$. Therefore, by Theorem \ref{huthm}, $\pi_1( X^{(r)}_{K_{n,m}})$ is left-orderable.

\section*{Acknowledgements} 
The second author is partially supported by a grant from the Simons Foundation (\#354595).


\begin{thebibliography}{99999}

\baselineskip15pt

\bibitem[BGW]{BGW} S. Boyer, C. Gordon and L. Watson, {\em On L-spaces and left-orderable fundamental groups}, Math. Ann., {\bf 356} (2013), 1213-1245.

\bibitem[DPT]{DPT}  M. Dabkowski, J. Przytycki and A. Togha, {\em Non-left-orderable 3-manifold groups}, Canad. Math. Bull., \textbf{48} (2005), 32--40.

\bibitem[Go]{Go}  C. Gordon, {\em Riley’s conjecture on $\mathrm{SL}(2, \BR)$ representations of 2-bridge knots}, J. Knot Theory Ramifications, \textbf{26} (2017), 1740003.

\bibitem[Hu]{Hu} Y. Hu, {\em The left-orderability and cycle branched coverings}, Algebr. Geom. Topol., {\bf 15} (2015), 399-413.

\bibitem[Kh]{Kh} V. Khoi, {\em A cut-and-paste method for computing the Seifert volumes}, Math. Ann., {\bf 326} (2003), 759-801.

\bibitem[OS]{OS} P. Ozsv\'ath and Z. Szab\'o {\em On knot Floer homology and lens space surgeries}, Topology, \textbf{44} (2005), 1281--1300.

\bibitem[Ri]{Ri} R. Riley, {\em Nonabelian representations of 2-bridge knot groups}, Quart. J. Math. Oxford Ser. (2), \textbf{35} (1984), 191--208.

\bibitem[Tr1]{Tr-sign} A. Tran, {\em Nonabelian representations and signatures of double twist knots}, J. Knot Theory Ramifications  \textbf{25} (2016), 1640013, 9 pp.

\bibitem[Tr2]{Tr-DTK} A. Tran, {\em On left-orderability and cyclic branched coverings}, J. Math. Soc. Japan \textbf{67} (2015), no. 3, 1169--1178.

\bibitem[Tr3]{Tr-torsion} A. Tran, {\em Reidemeister torsion and Dehn surgery on twist knots}, Tokyo J. Math., \textbf{39} (2016), 517--526.

\bibitem[Tu]{Tu} H. Turner, {\em Left-oderability, branched covers and double twist knots}, preprint, arXiv:2002.10611.

\end{thebibliography}
\end{document}